\title{Confidence disc and square for Cauchy distributions}
\author[Y. Akaoka]{Yuichi Akaoka}
\thanks{A part of this paper consists of Y.~A.'s master's thesis \cite{Akaoka2020a}.}
\address{Department of Mathematics, Faculty of Science, Shinshu University, 3--1--1, Asahi, Matsumoto, Nagano, 390--8621}
\curraddr{Gunma bank}
\email{18ss101b@gmail.com}
\author[K. Okamura]{Kazuki Okamura}
\address{Department of Mathematics, Faculty of Science, Shizuoka University, 836, Ohya, Suruga-ku, Shizuoka, 422--8527}
\email{okamura.kazuki@shizuoka.ac.jp}
\author[Y. Otobe]{Yoshiki Otobe}
\address{Department of Mathematics, Faculty of Science, Shinshu University, 3--1--1, Asahi, Matsumoto, Nagano, 390--8621}
\email{otobe@math.shinshu-u.ac.jp}
\subjclass[2000]{62F25}
\keywords{confidence disc; Cauchy distribution; central limit theorem}
\date{\today}
\dedicatory{}
\theoremstyle{plain}
\newtheorem{theorem}{Theorem}[section]
\newtheorem{proposition}[theorem]{Proposition}
\newtheorem{lemma}[theorem]{Lemma}
\newtheorem{corollary}[theorem]{Corollary}
\theoremstyle{definition}
\theoremstyle{remark}
\newtheorem{remark}{Remark}[section]
\def\E{{\mathbb{E}}}
\def\R{{\mathbb{R}}}
\def\C{{\mathbb{C}}}
\def\F{{\mathscr{F}}}
\def\Z{{\mathbb{Z}}}
\def\V{{\mathrm{Var}}}
\def\PV{{\mathrm{PV}}}
\DeclareMathOperator{\med}{med}
\def\e{\varepsilon}
\def\<{\left<}
\def\>{\right>}
\begin{document}
\begin{abstract}
We will construct a confidence region of parameters
for a sample of size $N$ from Cauchy distributed
random variables.
Although Cauchy distribution has two parameters,
a location parameter $\mu \in \R$ and a scale parameter $\sigma > 0$,
we will infer them at once by regarding them as a single complex
parameter $\gamma := \mu + i\sigma$.
The region should be a domain in the complex plane, and
we will give a simple and concrete formula to give the region
as a disc and a square.
\end{abstract}
\maketitle

\setcounter{tocdepth}{1}

\section{Introduction}
The Cauchy distribution is one of typical bell-shaped and stable laws
that has a longer and flatter tail,
which makes mathematical handling difficult.
But for that reason, typical samples from the Cauchy distribution
concentrate on its centre except for a few ``outliers'' so that
there might be cases that to model in Cauchy distribution rather than Gaussian
is preferable.
From this point of view,
in preceding articles \cite{Akaoka2021-2,Akaoka2021-1},
we have investigated estimators for the parameters of Cauchy distributions
from observations.
They are point estimations.
We are, however, in the present paper, concerned with an interval
estimation, that is, to construct confidence regions.
Confidence regions are not only practically often used
but also relate to statistical hypothesis testing
(see e.g., \cite[\S 7.1.2]{Shao2003}),
so it goes without saying its statistical importance.

On the other hand, to our knowledge, no concrete regions for
Cauchy distributions have been known, while few studies examined them.
One of them is a paper by
Haas, Bain and Antle \cite{Haas1970} which is based on \cite{Haas1969}.
They discussed mainly the maximal likelihood estimators for
Cauchy distributions and proposed confidence intervals
for the location parameter $\mu$;
but unfortunately, the explicit formula for the maximal likelihood estimator is known only for the case of sample sizes of 3 and 4, and furthermore, there is no algebraic closed-form formula for the case of sample size of 5.
See \cite{Ferguson1978} and \cite{Okamura2021}.
If the explicit formula is not known, then, the Newton--Raphson method has often been used.
Hinkley \cite[(2.12)]{Hinkley1978}
gives confidence regions for the pair of the location and scale by using the maximal likelihood estimator.
It is related to the likelihood ratio test, however, it is complicated,
and we are hard to imagine the concrete shape of the region.
Vrbik also obtained confidence regions \cite{Vrbik2013}
based on the maximal likelihood estimator.
See also \cite{CohenFreue2007, Kravchuk2012, Lawless1972}
for related inference methods for Cauchy distributions.

We also would like to point out that, though it is certainly true
those numerical simulations using a computer
have been becoming an effective
method for statistically testing or even constructing confidence intervals,
it is not trivial to apply them for Cauchy distributions.
To see it, we see, for example, a simple arithmetic mean of the Cauchy random variables
still obeys the Cauchy distribution which is
independent of the number of the sizes of samples:
we emphasize here that to choose a simple and nice estimator for
Cauchy distributions is non-trivial.
Let us also mention that few unbiased estimators were known so far
except, for instance, using MLEs \cite{McCullagh1993} or trimmed average \cite{Rothenberg1964} which actually eliminates the outliers from the observations.

In this situation, we would like to propose in the present paper
to take a geometric mean
(see Remark \ref{rem:geometric-mean} for the terminology).
There are several reasons why it is nicer than other quantities,
but since this problem belongs to the theory of point estimation,
we would not like to go in this direction further here.
We point out here only that the geometric mean has enough high integrability, and is unbiased.
See \cite{Akaoka2021-1} for some additional properties of the geometric mean.
Also since the geometric mean is an exponential of the arithmetic mean of the
logarithms, it is harmless to compute the quantity.
We, however, note that the geometric mean $\prod_{j=1}^N X_j^{1/N}$ is a
product of $1/N$-th power of (probably negative) observations.
So we are naturally led to treating all the quantities as complex numbers.
In particular, $X^p$ ($0 < |p| < 1$) and $\log X$ are all complex random variables.
Therefore, we will gather some basic quantities---mainly expectations of
such random variables---in Section \ref{sec:auxiliary-results}.

It is well known that, if $X$ obeys a Cauchy distribution,
$\E[|X|^p]$ exists so that $\E[X^p]$ also exists for $|p| < 1$.
The explicit formula of $\E[|X|^p]$  is given by \cite[(3.0.3)]{Zolotarev1986}.
However, the derivation of \cite[(3.0.3)]{Zolotarev1986} is very complicated, so we will give another way of calculations of them along with $\E[X^p]$  in Section \ref{sec:auxiliary-results}.
They are  rather simple and direct.
We also would like to expect that,
through these manipulations, it becomes clear
that considering in the complex plane is not a technical restriction
but an essential tool for the Cauchy distributions.
In particular, we hope it will be natural to regard the location parameter $\mu$
and the scale parameter $\sigma > 0$ of the Cauchy distribution
as a single complex parameter $\gamma = \mu + i \sigma$ but
two distinct real parameters;
note that this idea was executed by McCullagh \cite{McCullagh1996}.
Thus, our statistical inference for the parameters of Cauchy distributions
will be actually a problem for a single complex parameter,
that is, we guess naturally the location parameter and the scale parameter
at once.

Then we will indeed show that a central limit theorem holds for
the geometric mean (Lemma \ref{lem:CLTforGM}), and then modify it
to fit it to derive our confidence region (Lemma \ref{lem:CLT}) in
the complex plane. It is a standard procedure.
Finally, based on the central limit theorem we obtain the confidence
region for the Cauchy parameters; as a disc in the complex plane
(Theorem \ref{thm:ApproximatingConfidenceDisc}).
We will conclude the paper by showing several numerical examples
for our confidence disc in Section \ref{sec:NumericExamples}.

We wish to express our grateful thanks to an anonymous referee for careful reading of the manuscript and helpful comments which improve the paper.
Especially, Corollaries \ref{cor:confidence-square}
and \ref{cor:confidence-interval} are due to his or her suggestions.

\section{Auxiliary quantities for Cauchy distribution}

\label{sec:auxiliary-results}
In this section, we will gather some basic quantities which will be
used in the following sections.
In the sequel of the paper, we will denote by $C(\mu,\sigma)$ a
Cauchy distribution whose location parameter is $\mu \in \R$
and scale parameter is $\sigma > 0$.
We also denote by $X \sim C(\mu,\sigma)$ a real valued random variable
$X : \Omega \to \R$ when the push forward measure $P_X \equiv PX^{-1}$
on $\R$ is $C(\mu,\sigma)$, where $(\Omega, \F, P)$ is a probability
space that $X$ is defined on. The expectation of $X$ with respect to $P$
will be denoted by $\E[X]$.

For non-integer real number $p \in \R$, we almost surely define $X^p$
by $\exp\{p \log X\}$ as usual when $P(X = 0) = 0$.
Here the logarithmic function $\log$ is defined on a Riemann surface
$\{(z,\theta); z \in \C^*, \theta = \arg z\} \subset \C^* \times \R$, $\C^* := \C \setminus \{0\}$, that is,
$\log: (z, \theta) \mapsto \log |z| + i \theta$ using the usual $\log: \R_+ \to \R$;
$\log$ is a single-valued holomorphic function.
We may simply write $\log z$ as a function on $\C^*$
for $\log(z, \arg z)$ if there is no confusion.
In this case, we regard the logarithmic function has branches,
that is, $\log z = \log |z| + (\arg z + 2k\pi)i$, $0 \leq \arg z < 2\pi$,
$k \in \Z$, is a multivalued function on $\C^*$.

\begin{remark}
Since we have an option to take a branch of $\log$,
the values of $1^p$ and $(-1)^p$ may differ,
for instance if $p = 1/3$,
\begin{equation*}
\begin{array}{c|ccc}
\text{branch} & k = \ldots,-3, 0, 3,\ldots & k = \ldots, -2, 1, 4, \ldots &  k = \ldots, -1, 2, 5, \ldots  \\
\hline
   1^{1/3}    & e^{0\pi i} =  1 & e^{2/3\pi i} = -\frac{1}{2} + \frac{\sqrt{3}}{2}i & e^{4/3\pi i} = -\frac{1}{2} - \frac{\sqrt{3}}{2}i \\
(-1)^{1/3}    & e^{1/3\pi i} = \frac{1}{2} + \frac{\sqrt{3}}{2}i & e^{\pi i} = -1 & e^{5/3\pi i} = \frac{1}{2} - \frac{\sqrt{3}}{2} i\\
\end{array}
\end{equation*}

Note that, even if $x > 0$, $x^p$ may not be a real number but contains
an imaginary part. It does not affect any result while
we keep arbitrariness to choose a branch.

Let us emphasise that both $1^{1/3} = 1$ and $(-1)^{1/3} = -1$
never hold together at the same time.
If we prefer $1^{1/3} = 1$ then $(-1)^{1/3}$ is no more real
and if we prefer $(-1)^{1/3} = -1$ then $1^{1/3}$ becomes
complex. Otherwise, neither $1^{1/3}$ nor $(-1)^{1/3}$ is real.
\end{remark}

Hence, it is natural to handle complex valued random variables.
For such a complex valued random variable $Z$,
the expectation is defined as usual:
$\E[Z] := E[\mathrm{Re}(Z)] + i \E[\mathrm{Im}(Z)]$.
We can also define a variance as a nonnegative real number:
$\V(Z) := \E[ |Z - \E[Z]|^2 ] = \E[|Z|^2] - |\E[Z]|^2$.
Moreover, we may define
pseudo-variance $\PV(Z) := \E[(Z - \E[Z])^2] = \E[Z^2] - (\E[Z])^2$.
If a complex random variable $Z$ satisfies
(1) $\E[Z] = 0$, (2) $\V(Z) < \infty$ and (3) $\E[Z^2] = 0$,
it is called proper.
A proper random variable has a vanishing pseudo-variance.
We set $\gamma := \mu + i\sigma$, and we also write $C(\gamma)$ in place of $C(\mu,\sigma)$.

\begin{proposition}
Let $f : \C \setminus \{0\} \to \C$ be holomorphic and sublinear growth,
that is,
$\lim_{R \to \infty} \frac{1}{R}\sup_{|z| = R}|f(z)| = 0$
and $\lim_{\e \to 0} \e \sup_{|z| = \e}|f(z)| = 0$.
For $X \sim C(\gamma)$,
we see that
$\E[f(X)] = f(\gamma)$.
In particular, the following hold:
\begin{enumerate}
	\item $\E[X^p] = \gamma^p$ for $|p| < 1$; \label{enum:p-moment}
	\item $\E[(\log X)^p] = (\log \gamma)^p$ for all $p \geq 0$.
\end{enumerate}
\end{proposition}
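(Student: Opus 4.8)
The plan is to use the explicit density of $C(\mu,\sigma)$ together with a contour-integration argument in the upper half-plane. Recall that the density of $X \sim C(\gamma)$ is $\frac{1}{\pi}\frac{\sigma}{(x-\mu)^2+\sigma^2}$, which we may rewrite, using $\gamma = \mu + i\sigma$ and $\bar\gamma = \mu - i\sigma$, as
\[
\frac{1}{\pi}\frac{\sigma}{(x-\mu)^2+\sigma^2} = \frac{1}{2\pi i}\left(\frac{1}{x-\gamma} - \frac{1}{x-\bar\gamma}\right).
\]
Thus $\E[f(X)] = \frac{1}{2\pi i}\int_{\R} f(x)\left(\frac{1}{x-\gamma} - \frac{1}{x-\bar\gamma}\right)dx$. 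The key point is that this splits into two pieces, each of which is an integral against a simple pole, one lying strictly in the upper half-plane (at $\gamma$, since $\sigma > 0$) and one strictly in the lower half-plane (at $\bar\gamma$).

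First I would make precise the branch issue: since $f$ is holomorphic on $\C^*$ but $X^p$ and $\log X$ are genuinely defined on the Riemann surface, I would fix a branch cut along a ray that avoids the relevant poles — concretely, since $\gamma$ is in the open upper half-plane, one can take the cut along the negative imaginary axis (or any ray into the lower half-plane missing $\bar\gamma$), so that $f$ extends holomorphically to a neighborhood of the closed upper half-plane minus the origin. Then I would close the contour $\R$ with a large semicircle $C_R^+$ of radius $R$ in the upper half-plane for the $\frac{1}{x-\gamma}$ term; by the residue theorem the integral picks up $2\pi i \cdot f(\gamma)$, and the contribution of $C_R^+$ vanishes as $R \to \infty$ precisely because $\sup_{|z|=R}|f(z)| = o(R)$ and the factor $\frac{1}{x-\gamma}$ contributes a further $O(1/R)$ against arclength $O(R)$. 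One must also excise a small semicircle around the origin if $f$ blows up there, but the hypothesis $\e\sup_{|z|=\e}|f(z)| \to 0$ kills that contribution. For the $\frac{1}{x-\bar\gamma}$ term, the same closing contour in the upper half-plane encloses no pole, so that integral is $0$. Subtracting, $\E[f(X)] = \frac{1}{2\pi i}\cdot 2\pi i\, f(\gamma) = f(\gamma)$.

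For the two special cases I would simply verify the growth hypotheses. For $f(z) = z^p$ with $|p|<1$: near infinity $|z^p| = |z|^{\mathrm{Re}(p)}e^{-\mathrm{Im}(p)\arg z}$ grows like $|z|^{|p|}$ up to a bounded angular factor (with the fixed branch), which is $o(R)$ since $|p|<1$; near zero $\e|z^p| = \e^{1+\mathrm{Re}(p)}(\cdots) \to 0$ since $1 + \mathrm{Re}(p) > 0$. For $f(z)=(\log z)^p$ with $p \ge 0$: $|\log z|$ grows like $\log|z|$ plus a bounded angular term, so $(\log z)^p$ is $o(R)$ at infinity, and $\e|\log z|^p \to 0$ at the origin because $\e$ beats any power of $|\log\e|$. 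In both cases we also need $\E[|f(X)|]<\infty$ to justify writing the expectation as an absolutely convergent integral before deforming; this follows from the same bounds together with the $|x|^{-2}$ tail of the Cauchy density (for $|x|^p$ this is the classical fact $\E[|X|^p]<\infty$ for $|p|<1$ alluded to in the introduction, and for $|\log X|^p$ it is immediate).

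I expect the main obstacle to be bookkeeping rather than anything deep: one must choose the branch of $f$ consistently so that the deformation into the upper half-plane never crosses a branch cut, and one must handle the indentation at the origin (present for $p<0$ in case (1), and harmless but formally necessary in case (2)) carefully. A minor subtlety is that $f$ is only assumed holomorphic on $\C^*$, so the contour genuinely has to avoid $0$; the two limit hypotheses in the statement are exactly what is needed to dispose of the large and small arcs, so the argument is tight. Everything else — splitting the density, applying the residue theorem, checking the growth of $z^p$ and $(\log z)^p$ — is routine.
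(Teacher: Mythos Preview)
Your proposal is correct and follows essentially the same route as the paper: both decompose the Cauchy density as $\frac{1}{2\pi i}\bigl(\frac{1}{x-\gamma}-\frac{1}{x-\bar\gamma}\bigr)$ and then close the real contour in the upper half-plane, invoking Cauchy's integral formula so that the $\bar\gamma$-term contributes nothing and the $\gamma$-term yields $f(\gamma)$. The paper states this in one line, whereas you have additionally spelled out the branch-cut placement, the indentation at the origin, the role of the two growth hypotheses in disposing of the large and small arcs, and the integrability check for the two special cases---all of which are exactly the details implicit in the paper's appeal to Cauchy's integral formula.
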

\begin{proof}
Note that the probability density function for Cauchy distribution
is
\begin{equation}\label{eq:Cauchy-density}
p_\gamma(x) =
\frac{\sigma}{\pi} \frac{1}{(x - \mu)^2 + \sigma^2} =
\frac{1}{2\pi i}\left( \frac{1}{x - \gamma} - \frac{1}{x - \bar{\gamma}}\right).
\end{equation}
Since $\sigma > 0$, $1/(z - \bar{\gamma})$ is holomorphic on the upper half plane.
Therefore, the assertion follows immediately from Cauchy's integral formula.
\end{proof}

\begin{corollary}\label{cor:GM-unbiased}
If $X_1, X_2, \ldots, X_N \sim C(\gamma)$, $N \geq 2$, are independent, then
\begin{enumerate}
	\item their geometric mean is an unbiased estimator for
	$\gamma = \mu + i\sigma$, that is,
	$\E\left[ \prod_{j=1}^N X_j^{1/N} \right] = \gamma$;
	\item $\exp\left\{ \E \left[ \frac{1}{N} \sum_{j=1}^N \log X_j \right] \right\} = \gamma$.
\end{enumerate}
\end{corollary}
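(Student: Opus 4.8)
The plan is to derive both statements directly from the preceding Proposition together with independence. For the first assertion, since $N \geq 2$ the exponent $p = 1/N$ lies in $(0,1)$, so the first item of the Proposition applies to each factor and gives $\E[X_j^{1/N}] = \gamma^{1/N}$; moreover $\E\bigl[|X_j^{1/N}|\bigr] = \E\bigl[|X_j|^{1/N}\bigr] < \infty$, so each $X_j^{1/N}$ is integrable. The random variables $X_1^{1/N}, \dots, X_N^{1/N}$ are Borel functions of the independent $X_1, \dots, X_N$, hence independent, and since expectation is multiplicative over independent integrable complex random variables (apply the real-valued statement to real and imaginary parts), one obtains
\[
\E\left[\prod_{j=1}^N X_j^{1/N}\right] = \prod_{j=1}^N \E\bigl[X_j^{1/N}\bigr] = \bigl(\gamma^{1/N}\bigr)^N .
\]
It then remains to note that $\bigl(\gamma^{1/N}\bigr)^N = \gamma$.

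This last identity is where a small amount of care about branches is needed: reading $\gamma^{1/N}$ as $\exp\{\tfrac1N \log\gamma\}$ for a fixed choice of $\log\gamma$, the elementary identity $(\exp w)^N = \exp(Nw)$ with $w = \tfrac1N\log\gamma$ yields $\bigl(\gamma^{1/N}\bigr)^N = \exp\{\log\gamma\} = \gamma$, independently of the branch. The same convention makes the product $\prod_{j} X_j^{1/N}$ unambiguous, since $\prod_{j}\exp\{\tfrac1N\log X_j\} = \exp\{\tfrac1N\sum_{j}\log X_j\}$; in particular the second assertion is just the exponential of the expectation of $\tfrac1N\sum_j \log X_j$, so it follows from the first once we know $\E[\log X_j] = \log\gamma$.

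For the second assertion I would use linearity of expectation to move $\E$ inside the sum and then apply the second item of the Proposition with $p = 1$ (the function $\log$ is holomorphic and of sublinear growth near $0$ and near $\infty$), obtaining $\E[\log X_j] = \log\gamma$ for each $j$, so that $\exp\{\E[\tfrac1N\sum_{j} \log X_j]\} = \exp\{\tfrac1N \cdot N \log\gamma\} = \gamma$. The whole argument is short once the Proposition is in hand; the only genuinely delicate point — and the step I would double-check — is the bookkeeping of the branches of the complex power and logarithm, making sure the unbiasedness identity $\E[\cdot]=\gamma$ is compatible with whatever branch convention is implicit in the definition of the geometric mean, and that the integrability of the product $\prod_j X_j^{1/N}$ is genuinely justified (it is, because $\bigl|\prod_j X_j^{1/N}\bigr| = \prod_j |X_j|^{1/N}$ and $\E[|X_j|^{1/N}]<\infty$, so the product of expectations makes sense and equals the expectation of the product).
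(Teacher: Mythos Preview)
Your argument is correct and is exactly the intended one: the paper states this Corollary without proof, since both parts follow immediately from the preceding Proposition (items (1) and (2)) together with independence and linearity, precisely as you spell out. Your attention to integrability and to the branch conventions for $\gamma^{1/N}$ and $\log\gamma$ fills in the only points that could require care.
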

\begin{remark}\label{rem:geometric-mean}
In this paper, we call $\prod_{j=1}^N X_j^{1/N}$ the
\emph{geometric mean}.
Note that
$\prod_{j=1}^N X_j^{1/N}$ is not equal to $\left(\prod_{j=1}^N X_j\right)^{1/N}$ which may be usually referred to as the geometric mean.
\end{remark}

\begin{corollary}\label{cor:properness}
	If $X \sim C(\gamma)$, then
\begin{enumerate}
	\item pseudo-variances $\PV(X^p) = 0$ $(0 < p < 1/2)$ and $\PV(\log X) = 0$;
	\item $X^p - \gamma^p$ $(0 < p < 1/2)$ and $\log X - \log \gamma$ are
proper complex random variables.
\end{enumerate}
\end{corollary}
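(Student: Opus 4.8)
The plan is to reduce both assertions to the two identities in the Proposition, keeping in mind that ``proper'' means exactly the three conditions (i) $\E[Z]=0$, (ii) $\V(Z)<\infty$ and (iii) $\E[Z^2]=0$, and that (iii) is literally the statement $\PV(Z)=0$. Thus part (1) is the computation of the pseudo-variance, and part (2) will follow from part (1) once I also verify the mean-zero property and finiteness of the variance. The mean-zero property is immediate: by the Proposition (1), $\E[X^p]=\gamma^p$ for $0<p<1/2$, and by the Proposition (2) with exponent $1$, $\E[\log X]=\log\gamma$; these are exactly the contents of Corollary \ref{cor:GM-unbiased}, so $\E[X^p-\gamma^p]=0$ and $\E[\log X-\log\gamma]=0$.

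For part (1) I would expand $\PV(X^p)=\E[(X^p)^2]-(\E[X^p])^2$. On the Riemann surface, squaring doubles the argument, so $(X^p)^2=X^{2p}$ for the same branch, and hence $\E[(X^p)^2]=\E[X^{2p}]=\gamma^{2p}$ by the Proposition (1), which applies since $0<2p<1$; on the other hand $(\E[X^p])^2=(\gamma^p)^2=\gamma^{2p}$, so $\PV(X^p)=0$. Likewise $\PV(\log X)=\E[(\log X)^2]-(\E[\log X])^2=(\log\gamma)^2-(\log\gamma)^2=0$, using the Proposition (2) with exponents $2$ and $1$. This proves (1) and simultaneously establishes condition (iii) needed for (2).

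It then remains, for part (2), to check finite variance. Writing $X^p=|X|^p e^{ip\arg X}$ up to a unimodular branch factor gives $|X^p|^2=|X|^{2p}$, so $\V(X^p)=\E[|X|^{2p}]-|\gamma^p|^2$; the classical fact recalled in the introduction that $\E[|X|^q]<\infty$ for $|q|<1$ makes this finite precisely because $2p<1$ — this is where the hypothesis $p<1/2$, and not merely $p<1$, is indispensable. For the logarithm, $\V(\log X)=\E[|\log X|^2]-|\log\gamma|^2$ and $\E[|\log X|^2]<\infty$ because $|\log X|^2=(\log|X|)^2+(\arg X)^2$ with $\arg X$ bounded and $(\log|x|)^2$ integrable against the $x^{-2}$-tailed Cauchy density. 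Combining mean zero, finite variance and vanishing pseudo-variance yields properness of $X^p-\gamma^p$ and $\log X-\log\gamma$.

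The step I expect to require the most care is the branch bookkeeping: one must make sure that $(X^p)^2=X^{2p}$ and $|X^p|=|X|^p$ hold for whichever branch of $\log$ is in force, so that the appeal to the Proposition with exponent $2p$ is legitimate. Once that is pinned down the remaining work is a short computation, and the only genuinely quantitative input is the integrability threshold $2p<1$.
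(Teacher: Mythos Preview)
Your argument is correct and is exactly the intended one: the paper states this as an immediate corollary of the preceding Proposition and gives no separate proof, and what you have written is precisely the short verification that the Proposition forces $\E[X^{2p}]=(\E[X^p])^2$ and $\E[(\log X)^2]=(\E[\log X])^2$, together with the mean-zero and finite-variance checks. Your remarks on the branch bookkeeping are apt---$(X^p)^2=\exp(2p\log X)=X^{2p}$ and $|X^p|=\exp(p\log|X|)=|X|^p$ hold for whatever branch is fixed---and your identification of the threshold $2p<1$ as the place where the hypothesis $p<1/2$ enters is the right reason.
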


\begin{proposition}\label{prop:several-quantities}
If $X \sim C(\gamma)$ and $|p| < 1$, then,
\begin{enumerate}
	\item $\E[ X^p, X > 0] = \frac{\gamma^p - \bar{\gamma}^p}{1 - e^{2p\pi i}} = |\gamma|^p \frac{\sin p(\pi - \arg\gamma)}{\sin p\pi}$; \label{enum:positive-expectation}
	\item $\E[ |X|^p ] = \frac{\gamma^p + (-\bar{\gamma})^p}{1 + e^{p\pi i}}
	= |\gamma|^p \frac{\cos(p(\arg\gamma - \pi/2))}{\cos (p\pi/2)}$;
	\label{enum:abs-p-moment}
	\item $\E[ \log |X| ] = \log |\gamma|$; \label{enum:log-abs-moment}
	\item $\E[ (\log|X|)^2 ] = (\log|\gamma|)^2 + \arg\gamma(\pi - \arg \gamma)$;\label{enum:log-abs-square-moment}
	\item $\V(\log|X|) = \arg \gamma( \pi - \arg \gamma )$;\label{enum:log-abs-variance}
	\item $\V(\log X) = 2 \V(\log|X|) = 2 \arg\gamma ( \pi - \arg\gamma )$. \label{enum:log-variance}
\end{enumerate}
\end{proposition}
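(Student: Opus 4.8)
The plan is to reduce everything to the preceding Proposition---that $\E[f(X)]=f(\gamma)$ for holomorphic $f$ of sublinear growth, so in particular $\E[X^{p}]=\gamma^{p}$ for $|p|<1$ and $\E[(\log X)^{p}]=(\log\gamma)^{p}$ for $p\ge 0$---together with the trivial remark that $-X\sim C(-\bar\gamma)$ whenever $X\sim C(\gamma)$, where $\arg(-\bar\gamma)=\pi-\arg\gamma\in(0,\pi)$.

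For the first formula I would fix the branch of the power so that, for real $x$, $x^{p}=|x|^{p}$ on $\{x>0\}$ and $x^{p}=e^{ip\pi}|x|^{p}$ on $\{x<0\}$ (hence $(-x)^{p}=e^{ip\pi}|x|^{p}$ on $\{x>0\}$ and $(-x)^{p}=|x|^{p}$ on $\{x<0\}$). Putting $A:=\E[X^{p};X>0]$ and $B:=\E[X^{p};X<0]$, the identities $\E[X^{p}]=\gamma^{p}$ and $\E[(-X)^{p}]=(-\bar\gamma)^{p}$ read
\begin{equation*}
A+B=\gamma^{p},\qquad e^{ip\pi}A+e^{-ip\pi}B=(-\bar\gamma)^{p},
\end{equation*}
whose solution is $A=\bigl((-\bar\gamma)^{p}-e^{-ip\pi}\gamma^{p}\bigr)\big/\bigl(e^{ip\pi}-e^{-ip\pi}\bigr)$. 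Substituting $\gamma^{p}=|\gamma|^{p}e^{ip\arg\gamma}$ and $(-\bar\gamma)^{p}=|\gamma|^{p}e^{ip(\pi-\arg\gamma)}$ collapses this to $|\gamma|^{p}\sin\!\bigl(p(\pi-\arg\gamma)\bigr)\big/\sin(p\pi)$; multiplying numerator and denominator of $A$ by $-e^{ip\pi}$ instead puts it in the form $(\gamma^{p}-\bar\gamma^{p})\big/(1-e^{2p\pi i})$ once $\bar\gamma^{p}$ is read with $\arg\bar\gamma=2\pi-\arg\gamma$ (the case $p=0$ being degenerate, read in the limit, where $A=\P(X>0)$). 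Equivalently one may integrate $z^{p}/\bigl((z-\gamma)(z-\bar\gamma)\bigr)$ once around a keyhole contour along the positive real axis, which pins the branch of $\bar\gamma^{p}$ explicitly.

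The second formula follows by the decomposition $\E[|X|^{p}]=\E[X^{p};X>0]+\E[(-X)^{p};X<0]$, whose second term is the quantity just computed applied to $-X\sim C(-\bar\gamma)$; this gives $|\gamma|^{p}\bigl(\sin p(\pi-\arg\gamma)+\sin p\arg\gamma\bigr)\big/\sin p\pi$, and a sum-to-product identity together with $\sin p\pi=2\sin(p\pi/2)\cos(p\pi/2)$ turns it into $|\gamma|^{p}\cos\!\bigl(p(\arg\gamma-\pi/2)\bigr)\big/\cos(p\pi/2)$, the quotient form $(\gamma^{p}+(-\bar\gamma)^{p})\big/(1+e^{p\pi i})$ being the same after factoring $e^{ip\pi/2}$ out of numerator and denominator. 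For $\E[\log|X|]$ I take $p=1$ in $\E[(\log X)^{p}]=(\log\gamma)^{p}$, i.e.\ $\E[\log X]=\log\gamma$, and compare real parts, since $\mathrm{Re}(\log X)=\log|X|$ and $\mathrm{Re}(\log\gamma)=\log|\gamma|$. For $\E[(\log|X|)^{2}]$ I use $(\log|X|)^{2}=\mathrm{Re}\bigl((\log X)^{2}\bigr)+(\arg X)^{2}$ (because $a^{2}=\mathrm{Re}\bigl((a+ib)^{2}\bigr)+b^{2}$), take expectations with $\E[(\log X)^{2}]=(\log\gamma)^{2}$, and observe that $\arg X\in\{0,\pi\}$, so $\E[(\arg X)^{2}]=\pi^{2}\P(X<0)$; finally $\P(X<0)=\arg\gamma/\pi$ because $\pi\,\P(X<0)=\E[\arg X]=\mathrm{Im}(\E[\log X])=\mathrm{Im}(\log\gamma)=\arg\gamma$. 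Assembling, $\E[(\log|X|)^{2}]=(\log|\gamma|)^{2}-(\arg\gamma)^{2}+\pi\arg\gamma$.

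The variance of $\log|X|$ is then immediate, and for $\V(\log X)$ I split $\log X-\log\gamma=(\log|X|-\log|\gamma|)+i(\arg X-\arg\gamma)$ into its two centred real components: the squared modulus is the sum of their squares, and the variances $\V(\log|X|)$ and $\V(\arg X)=\pi\arg\gamma-(\arg\gamma)^{2}$ coincide, so $\V(\log X)=2\V(\log|X|)=2\arg\gamma(\pi-\arg\gamma)$; this coincidence of the two variances is also exactly the properness of $\log X-\log\gamma$ from Corollary~\ref{cor:properness}. The only delicate point is the branch bookkeeping in the first two formulas---keeping the branches of $X^{p}$, $(-X)^{p}$, $\gamma^{p}$, $(-\bar\gamma)^{p}$ and $\bar\gamma^{p}$ mutually consistent, and noticing that in each of those items the two displayed closed forms agree only under the appropriate reading of $\arg\bar\gamma$ (resp.\ $\arg(-\bar\gamma)$); the trigonometric expressions, being manifestly branch-free, are the safe target to establish first, everything else being routine trigonometry.
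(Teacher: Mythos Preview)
Your argument is correct, and it takes a genuinely different route from the paper. The paper proves (\ref{enum:positive-expectation}) and (\ref{enum:abs-p-moment}) by a fresh keyhole contour integration of $z^{p}p_{\gamma}(z)$ along the positive axis (the option you mention only in passing), and then obtains (\ref{enum:log-abs-moment})--(\ref{enum:log-abs-variance}) by recognising $g(p):=\E[|X|^{p}]$ as the moment generating function of $\log|X|$ and Taylor-expanding the closed form of $g$ around $p=0$ to read off $g'(0)$ and $g''(0)$. By contrast, you never integrate anything new: you recycle the preceding Proposition twice---once for $X$ and once for $-X\sim C(-\bar\gamma)$---to set up a $2\times 2$ linear system for $A$ and $B$, and for the log-moments you work directly from $\E[\log X]=\log\gamma$ and $\E[(\log X)^{2}]=(\log\gamma)^{2}$ via the algebraic identity $a^{2}=\mathrm{Re}\bigl((a+ib)^{2}\bigr)+b^{2}$ together with $\arg X\in\{0,\pi\}$ and $\P(X<0)=\arg\gamma/\pi$. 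Your route is lighter---no second residue computation, no series expansion---and makes the dependence on the earlier Proposition maximally transparent; the paper's MGF expansion, on the other hand, would mechanically yield all higher moments of $\log|X|$ as well. Your warning about keeping the branches of $\gamma^{p}$, $\bar\gamma^{p}$ and $(-\bar\gamma)^{p}$ consistent (in particular reading $\arg\bar\gamma=2\pi-\arg\gamma$ to match the stated quotient form) is exactly the point where the two displayed expressions in (\ref{enum:positive-expectation}) are reconciled, and it is handled correctly.
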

\begin{proof}
We will use the probability density of the Cauchy distribution in a form given by \eqref{eq:Cauchy-density}.

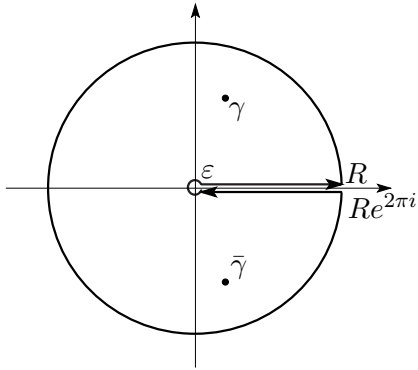
\begin{figure}[htbp]
{\unitlength 0.1in%
\begin{picture}(20.1000,18.9500)(0.2500,-19.2500)%
%
\special{pn 13}%
\special{ar 1002 986 36 38 0.4206633 5.7783239}%
%
\special{pn 13}%
\special{pa 1036 970}%
\special{pa 1758 970}%
\special{fp}%
\special{sh 1}%
\special{pa 1758 970}%
\special{pa 1691 950}%
\special{pa 1705 970}%
\special{pa 1691 990}%
\special{pa 1758 970}%
\special{fp}%
%
\special{pn 4}%
\special{sh 1}%
\special{ar 1160 520 16 16 0 6.2831853}%
%
\special{pn 4}%
\special{sh 1}%
\special{ar 1160 1480 16 16 0 6.2831853}%
\put(11.7800,-5.4200){\makebox(0,0)[lt]{$\gamma$}}%
\put(11.8600,-14.6000){\makebox(0,0)[lb]{$\bar{\gamma}$}}%
\put(10.3400,-9.3800){\makebox(0,0)[lb]{$\varepsilon$}}%
%
\special{pn 4}%
\special{pa 25 990}%
\special{pa 2015 990}%
\special{fp}%
\special{sh 1}%
\special{pa 2015 990}%
\special{pa 1948 970}%
\special{pa 1962 990}%
\special{pa 1948 1010}%
\special{pa 2015 990}%
\special{fp}%
\special{pa 1005 1925}%
\special{pa 1005 30}%
\special{fp}%
\special{sh 1}%
\special{pa 1005 30}%
\special{pa 985 97}%
\special{pa 1005 83}%
\special{pa 1025 97}%
\special{pa 1005 30}%
\special{fp}%
%
\special{pn 13}%
\special{pa 1760 1010}%
\special{pa 1040 1010}%
\special{fp}%
\special{sh 1}%
\special{pa 1040 1010}%
\special{pa 1107 1030}%
\special{pa 1093 1010}%
\special{pa 1107 990}%
\special{pa 1040 1010}%
\special{fp}%
\put(17.7800,-9.6200){\makebox(0,0)[lb]{$R$}}%
\put(17.9200,-10.1600){\makebox(0,0)[lt]{$R e^{2\pi i}$}}%
%
\special{pn 13}%
\special{ar 1002 990 760 760 0.0341972 6.2516169}%
\end{picture}}
	\caption{Integral curve for $\int_0^\infty$}
	\label{fig:integralcurve1}
\end{figure}

First, we will compute an integration along a curve like Figure
\ref{fig:integralcurve1}. Using Cauchy's integral formula and letting $R \to \infty$ and $\e \to 0$, we clearly get
\[
\int_0^\infty x^p p_\gamma(x)\,dx + \int_\infty^0 e^{2p\pi i}x^p p_\gamma(x)\,dx = \gamma^p - \bar{\gamma}^p;
\]
we note that two poles $\gamma$ and $\bar{\gamma}$ stay on the same leaf
but each segment in the left-hand side integrals stays on different leaves.
Thus, we get (\ref{enum:positive-expectation}):
\[
\int_0^\infty x^p p_\gamma(x)\,dx
= \frac{\gamma^p - \bar{\gamma}^p}{1 - e^{2p\pi i}}
= |\gamma|^p \frac{\sin p(\arg\gamma - \pi)}{\sin p\pi}.
\]

Then, to compute (\ref{enum:abs-p-moment}), we first note that
\[\E[ |X|^p ]
= \int_{-\infty}^\infty |x|^p p_\gamma(x)\,dx
= \int_0^\infty x^p ( p_\gamma(x) + p_\gamma(-x))\,dx.
\]
Similarly to (\ref{enum:positive-expectation}), we get,
by Cauchy's integral formula again,
\[
(1 - e^{2p\pi i})\E[|X|^p] = \gamma^p - \bar{\gamma}^p + (-\bar{\gamma})^p - (-\gamma)^p.
\]
Noting that with setting $\theta := \arg \gamma$,
\[
\gamma^p + (-\bar{\gamma})^p = |\gamma| ( e^{ip\theta} + e^{ip(\pi-\theta)})
\ \text{and}\
\bar{\gamma}^p + (-\gamma)^p = e^{ip\pi}(\gamma^p + (-\bar{\gamma})^p),
\]
we finally have (\ref{enum:abs-p-moment}):
\[
\E[|X|^p] = |\gamma|\frac{e^{ip\theta} + e^{ip(\pi-\theta)}}{1 + e^{p\pi i}}.
\]

Now, we note that $g(p) := \E[|X|^p] = \E[e^{p\log|X|}]$ is the
moment generating function of $\log|X|$.
We also note that $z \mapsto \alpha^z$, $\alpha \in \C$
and $z \mapsto 1 + e^{z \pi i}$ are
holomorphic. Therefore, expanding them as
\begin{align*}
\alpha^p &= 1 + (\log \alpha) p + \frac{1}{2!}(\log \alpha)^2p^2 + \frac{1}{3!}(\log \alpha)^3 p^3 + \cdots, \\
\frac{1}{1 + e^{p \pi i}} &= \frac{1}{2} - \frac{1}{4}\pi i p - \frac{1}{48}\pi^3 i p^3 + \cdots,
\end{align*}
allows us the following expansion:
\begin{multline*}
g(p) = \frac{\gamma^p + (-\bar{\gamma})^p}{1 + e^{p\pi i}}
= 1 + \frac{1}{2}( \log \gamma + \log(-\bar{\gamma}) - \pi i)p \\
+ \frac{1}{4}[ ((\log \gamma)^2 + (\log(-\bar{\gamma}))^2) - (\log \gamma + \log(-\bar{\gamma}))\pi i]p^2 + \cdots,
\end{multline*}
so that $\log \gamma + \log(-\bar\gamma) = \log(-|\gamma|^2)
= 2\log|\gamma| + \pi i$ and
$g'(0) = \log|\gamma| = \E[\log|X|]$ provides (\ref{enum:log-abs-moment}).
To show (\ref{enum:log-abs-square-moment}) is routine; letting $\gamma = |\gamma| e^{i\theta}$, $\log \gamma = \log|\gamma| + i\theta$ and
$\log (-\gamma) = \log|\gamma| + i(\theta - \pi)$; $\log|\gamma| \in \R$.
Since $g''(0) = (\log|\gamma|)^2 - \theta(\theta - \pi) = \E[(\log|X|)^2]$,
(\ref{enum:log-abs-variance}) also follows.

To show (\ref{enum:log-variance}),
we first note that $\V(\log X) = \E[|\log X - \log \gamma|^2]
= \E[|\log X|^2] - |\E[\log X]|^2 = \E[|\log X|^2] - |\log \gamma|^2$
from the definition.
Now we may fix a primary branch, that is,
$\log X = \log |X| + 1_{\{X < 0\}} \pi i$.
Hence, $|\log X|^2 = (\log |X|)^2 + 1_{\{X < 0\}} \pi^2$.
Thus, $\E[|\log X|^2] = (\log|\gamma|)^2 + \arg\gamma(\pi - \arg\gamma) + \pi \arg\gamma$.
Since $|\log \gamma|^2 = (\log|\gamma|)^2 + (\arg \gamma)^2$,
we have the conclusion.
\end{proof}

For the geometric mean, we obtain explicit values of the covariances of the real and imaginary parts.

\begin{proposition}
If $n \ge 3$, then,
\begin{multline*}
	 \textup{Var}(\textup{Re}(X_1^{1/n} \cdots X_n^{1/n})) = \textup{Var}(\textup{Im}(X_1^{1/n} \cdots X_n^{1/n})) \\
	 = \frac{r^2}{2} \left(\left(\frac{\cos\left( (2\arg\gamma -\pi)/n \right)}{\cos (\pi/n)}\right)^n  -1 \right)
\end{multline*} 
and
\[ \textup{Cov}\left(\textup{Re}(X_1^{1/n} \cdots X_n^{1/n}), \textup{Im}(X_1^{1/n} \cdots X_n^{1/n})\right) = 0. \]
\end{proposition}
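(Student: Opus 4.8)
The strategy is to reduce everything to independent products and then invoke the properness facts already established. Write $r:=|\gamma|$ and $P:=X_1^{1/n}\cdots X_n^{1/n}=Z_1\cdots Z_n$ with $Z_j:=X_j^{1/n}$. Since $n\ge 3$ the exponent $1/n$ lies in $(0,1/2)$, so Corollary~\ref{cor:properness} says $Z_j-\gamma^{1/n}$ is a proper complex random variable; in particular $\E[Z_j]=\gamma^{1/n}$, and since $\PV$ is translation invariant, $\PV(Z_j)=0$, i.e. $\E[Z_j^{2}]=(\E[Z_j])^{2}=\gamma^{2/n}$. For the squared modulus we use $|Z_j|^{2}=|X_j|^{2/n}$ and Proposition~\ref{prop:several-quantities}(\ref{enum:abs-p-moment}) with $p=2/n\in(0,1)$ to get $\E[|Z_j|^{2}]=r^{2/n}\cos\!\big((2\arg\gamma-\pi)/n\big)/\cos(\pi/n)$.

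Next I would take products over $j=1,\dots,n$, using independence of the $X_j$ (hence of the $Z_j$, of the $Z_j^{2}$, and of the nonnegative $|Z_j|^{2}$). This yields
\[
\E[P]=\gamma,\qquad \E[P^{2}]=\gamma^{2},\qquad \E[|P|^{2}]=r^{2}\left(\frac{\cos\big((2\arg\gamma-\pi)/n\big)}{\cos(\pi/n)}\right)^{n}.
\]
Hence the centred variable $W:=P-\gamma$ satisfies $\E[W]=0$, $\E[W^{2}]=\E[P^{2}]-2\gamma\,\E[P]+\gamma^{2}=0$, and $\V(W)=\E[|P|^{2}]-|\gamma|^{2}=r^{2}\big((\cos((2\arg\gamma-\pi)/n)/\cos(\pi/n))^{n}-1\big)<\infty$; that is, $W$ is itself a proper complex random variable with the stated variance.

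Finally I would translate properness of $W=(\textup{Re}(P)-\mu)+i(\textup{Im}(P)-\sigma)$ into the three asserted identities. Writing $U:=\textup{Re}(P)$ and $V:=\textup{Im}(P)$ (so $\E[U]=\mu$, $\E[V]=\sigma$), separating real and imaginary parts in $\E[W^{2}]=\big(\V(U)-\V(V)\big)+2i\,\textup{Cov}(U,V)$ and using $\E[W^{2}]=0$ forces $\V(U)=\V(V)$ and $\textup{Cov}(U,V)=0$; combining $\V(U)=\V(V)$ with $\V(U)+\V(V)=\E[|W|^{2}]=\V(W)$ gives $\V(U)=\V(V)=\V(W)/2$, which is exactly the claimed formula.

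The argument is routine once these reductions are set up; the only places needing care are the exponent ranges — $1/n\in(0,1/2)$ and $2/n\in(0,1)$ hold precisely because $n\ge3$, which is where the hypothesis enters — and the bookkeeping that $(X^{1/n})^{2}=X^{2/n}$ and $|X^{1/n}|=|X|^{1/n}$ are honest identities once a branch is fixed, since $X^{p}=\exp(p\log X)$. I do not expect a genuine obstacle: the content is entirely in the already proved moment formulas together with the elementary algebra of proper complex random variables.
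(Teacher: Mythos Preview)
Your argument is correct and is essentially the paper's own proof: both compute $\E[P]=\gamma$, $\E[P^{2}]=\gamma^{2}$ (via $(X_j^{1/n})^{2}=X_j^{2/n}$ and independence) and $\E[|P|^{2}]$ (via Proposition~\ref{prop:several-quantities}(\ref{enum:abs-p-moment}) with $p=2/n$), and then read off the equality of variances and the vanishing covariance from the real and imaginary parts of $\E[(P-\gamma)^{2}]=0$ together with $\V(U)+\V(V)=\E[|P-\gamma|^{2}]$. The only cosmetic difference is that you package the last step in the language of proper complex random variables, whereas the paper writes out the identities $\textup{Re}(P)^{2}-\textup{Im}(P)^{2}=\textup{Re}(P^{2})$ and $2\,\textup{Re}(P)\,\textup{Im}(P)=\textup{Im}(P^{2})$ directly; these are the same computations.
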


\begin{proof}
Let $\mu := \textup{Re}(\gamma)$ and $\sigma := \textup{Im}(\gamma)$.
By the unbiasedness, we see that
\[ \textup{Var}(\textup{Re}(X_1^{1/n} \cdots X_n^{1/n})) = E\left[ \textup{Re}(X_1^{1/n} \cdots X_n^{1/n})^2\right] - \mu^2\]
and
\[ \textup{Var}(\textup{Im}(X_1^{1/n} \cdots X_n^{1/n})) = E\left[ \textup{Im}(X_1^{1/n} \cdots X_n^{1/n})^2\right] - \sigma^2.\]

Since $X_1, \dots, X_n \in \mathbb R$,
$$\left(X_1^{1/n} \cdots X_n^{1/n}\right)^2 = (X_1^{1/n} X_1^{1/n}) \cdots (X_n^{1/n} X_n^{1/n}) = X_1^{2/n} \cdots X_n^{2/n}.$$

Since
\[ \textup{Re}(X_1^{1/n} \cdots X_n^{1/n})^2 - \textup{Im}(X_1^{1/n} \cdots X_n^{1/n})^2 = \textup{Re}(X_1^{2/n} \cdots X_n^{2/n}), \]
we see that
\[ E[\textup{Re}(X_1^{1/n} \cdots X_n^{1/n})^2] - E[\textup{Im}(X_1^{1/n} \cdots X_n^{1/n})^2] = \mu^2 -\sigma^2.  \]

Hence,
\[ \textup{Var}(\textup{Re}(X_1^{1/n} \cdots X_n^{1/n})) = \textup{Var}(\textup{Im}(X_1^{1/n} \cdots X_n^{1/n})). \]
By this and Proposition \ref{prop:several-quantities} (3),
we  see that $\textup{Var}(\textup{Re}(X_1^{1/n} \cdots X_n^{1/n}))$ and $\textup{Var}(\textup{Im}(X_1^{1/n} \cdots X_n^{1/n}))$ are both equal to $\frac{r^2}{2} \left(\left(\frac{\cos\left( (\pi - 2\arg\gamma)/n \right)}{\cos (\pi/n)}\right)^n  -1 \right)$.

Since
\[ 2\textup{Re}(X_1^{1/n} \cdots X_n^{1/n}) \textup{Im}(X_1^{1/n} \cdots X_n^{1/n}) = \textup{Im}(X_1^{2/n} \cdots X_n^{2/n}), \]
we see that
\[ E\left[\textup{Re}(X_1^{1/n} \cdots X_n^{1/n}) \textup{Im}(X_1^{1/n} \cdots X_n^{1/n})\right] = \mu \sigma.  \qedhere \]
\end{proof}

\section{Central limit theorem and asymptotic confidence disc and square for geometric mean}

To state a central limit theorem for the geometric mean of the
Cauchy random variables, let us begin with recalling that
a complex random variable $Z_{0,1}$ is standard complex normal
if $\mathrm{Re}\,Z_{0,1}$ and $\mathrm{Im}\,Z_{0,1}$ are independent,
and both $\mathrm{Re}\,Z_{0,1}$ and $\mathrm{Im}\,Z_{0,1}$ obey
$N(0, 1/2)$.
Now let us assume that
$X_1, X_2, \ldots$ are independent and identically distributed random
variables with $\E[X_1] = m$ whose real and imaginary parts have
variance $v/2$ and covariance $0$.
Then, it holds that
as $n \to \infty$, $Z_N = [\sum_{j=1}^N X_j - nm] / (\sqrt{Nv})$
converge in law to the standard complex normal random variable
(\cite[Example 2.18]{Vaart1998}).

We already know from Corollary \ref{cor:properness} that,
if $X \sim C(\gamma)$,
$\log X - \log \gamma$ is proper,
that is, the real part and the imaginary part are independent and have
a same distribution with mean 0. And
we also have computed in Proposition \ref{prop:several-quantities} (\ref{enum:log-variance})
that the variance of $\log X$ is $v \equiv 2 \arg \gamma(\pi - \arg \gamma)$.
Therefore, we get the following central limit theorem for the logarithms of
the Cauchy random variables.

\begin{lemma}\label{lem:standardCLT}
Let $X_1, X_2, \ldots \sim C(\gamma)$ be independent.
Then it holds that
\[
\frac{\sqrt{N}}{\sqrt{v}} \left(\frac{1}{N} \sum_{j=1}^N \log X_j - \log \gamma \right) \to Z_{0,1},
\]
as $N \to \infty$ in law.
\end{lemma}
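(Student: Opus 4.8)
The plan is to obtain this as an immediate consequence of the two-dimensional central limit theorem recalled just above, i.e.\ the statement cited from \cite[Example 2.18]{Vaart1998}, applied to the i.i.d.\ sequence $Y_j := \log X_j$. Here one fixes once and for all a branch of the logarithm, say the primary one used in the proof of Proposition \ref{prop:several-quantities}, so that $\log X_j = \log|X_j| + 1_{\{X_j < 0\}}\pi i$ and, consistently, $\log\gamma = \log|\gamma| + i\arg\gamma$ with $\arg\gamma \in (0,\pi)$ (the latter range is available because $\sigma > 0$). With this fixed branch the $Y_j$ are genuinely i.i.d., and the only remaining task is to check that their common law has the mean and covariance structure demanded by that CLT: a well-defined expectation $m$, real and imaginary parts of equal finite variance, and vanishing covariance between them.

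First I would appeal to Corollary \ref{cor:properness}, which asserts that $\log X - \log\gamma$ is a proper complex random variable. Unwinding the definition of propriety, this means $\E[\log X - \log\gamma] = 0$, $\V(\log X) < \infty$, and $\E[(\log X - \log\gamma)^2] = 0$. Writing $\log X - \log\gamma = A + iB$ with $A, B$ real, the vanishing of the pseudo-variance $\E[(A+iB)^2] = \E[A^2] - \E[B^2] + 2i\,\E[AB]$ forces $\E[A^2] = \E[B^2]$ and $\E[AB] = 0$, while $\E[A] = \E[B] = 0$ from the first condition. Hence $\E[Y_1] = \log\gamma$, and $\mathrm{Re}\,Y_1$ and $\mathrm{Im}\,Y_1$ have equal variances and zero covariance --- exactly the structure required. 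The numerical value of the variance is then supplied by Proposition \ref{prop:several-quantities}(\ref{enum:log-variance}): $\V(\log X) = 2\arg\gamma(\pi - \arg\gamma) =: v$, and since for a centred complex random variable the total variance is the sum of the variances of its real and imaginary parts, each of those parts has variance $v/2$.

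Feeding $m = \log\gamma$ and the common part-variance $v/2$ into the cited theorem gives
\[
\frac{1}{\sqrt{Nv}}\left(\sum_{j=1}^N \log X_j - N\log\gamma\right) \longrightarrow Z_{0,1} \quad (N \to \infty)
\]
in law; rewriting the left-hand side as $\frac{\sqrt{N}}{\sqrt{v}}\bigl(\frac{1}{N}\sum_{j=1}^N \log X_j - \log\gamma\bigr)$ yields the assertion. I do not expect a genuine obstacle here --- the lemma is essentially bookkeeping on top of a standard CLT. The only points that warrant any care are the branch issue (using a single consistent branch of $\log$ for all $X_j$ and for $\gamma$, so that the sequence is truly i.i.d.\ with mean $\log\gamma$) and the elementary observation that propriety yields precisely the covariance structure of the complex CLT --- equal variances and zero covariance of the two coordinates --- rather than merely a finite second moment.
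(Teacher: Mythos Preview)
Your argument is correct and follows exactly the paper's approach: invoke Corollary~\ref{cor:properness} for propriety of $\log X - \log\gamma$, Proposition~\ref{prop:several-quantities}(\ref{enum:log-variance}) for the value of $v$, and feed these into the complex CLT cited from \cite[Example 2.18]{Vaart1998}. If anything, you are slightly more careful than the paper, which loosely glosses propriety as ``the real part and the imaginary part are independent and have a same distribution'' whereas you correctly extract only equal variances and zero covariance --- which is all the CLT actually needs.
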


To get a central limit theorem for the geometric mean around $\gamma = \mu + i \sigma$,
we need the following complex version of the delta-method \cite[Theorem 1.12]{Shao2003}, for which the proof is omitted since it is straightforward.

\begin{lemma}
Let $X_1, X_2, \ldots$ and $Y$ be complex random variables satisfying
\[ b_n (X_n - m) \to Y \]
as $n \to \infty$ in law,
where $m \in \C$ and $\{b_n\}$ is a sequence of positive numbers
with $b_n \to \infty$.
Let $f: \C \to \C$ be a holomorphic function on a domain containing $m \in \C$,
then
\[
b_n[ f(X_n) - f(m) ] \to f'(m) Y
\]
as $n \to \infty$ in law.
\end{lemma}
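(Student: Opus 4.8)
The plan is to imitate the classical (real) delta method, the only genuinely new ingredient being that holomorphy of $f$ supplies a first–order Taylor expansion at $m$ with the complex derivative $f'(m)$ as the coefficient; everything else follows from tightness and a Slutsky–type argument carried out in $\R^2 \cong \C$. First I would fix an open disc $U \ni m$ on which $f$ is holomorphic and set
\[
g(z) := \frac{f(z) - f(m)}{z - m} - f'(m) \quad (z \in U \setminus \{m\}), \qquad g(m) := 0,
\]
so that complex differentiability of $f$ at $m$ says exactly that $g$ is continuous at $m$, and on $U$ one has the identity $f(z) - f(m) = \bigl(f'(m) + g(z)\bigr)(z - m)$. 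Evaluating at $X_n$ on the event $\{X_n \in U\}$ gives
\[
b_n\bigl[f(X_n) - f(m)\bigr] = \bigl(f'(m) + g(X_n)\bigr)\, b_n(X_n - m).
\]

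Next I would note that $W_n := b_n(X_n - m)$, being convergent in law, is uniformly tight, and since $b_n \to \infty$ this forces $X_n - m = W_n / b_n \to 0$ in probability; in particular $P(X_n \notin U) \to 0$, so the conditioning on $\{X_n \in U\}$ above is harmless in the limit. Because $g$ is continuous at $m$ with $g(m) = 0$, the continuous mapping theorem then yields $g(X_n) \to 0$ in probability (extending $g$ by, say, $0$ outside $U$ to make it globally defined is legitimate, since the extension's values are never seen once $X_n$ has entered $U$ with probability close to $1$). Hence $f'(m) + g(X_n) \to f'(m)$ in probability.

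Finally I would combine $f'(m) + g(X_n) \to f'(m)$ (in probability) with $W_n \to Y$ (in law) via Slutsky's theorem — applied to the pair $\bigl(f'(m) + g(X_n),\, W_n\bigr)$ in $\C \times \C$ and then pushed through the continuous multiplication map $(a,w) \mapsto aw$ — to conclude that $b_n[f(X_n) - f(m)] \to f'(m) Y$ in law. I do not expect a serious obstacle here: the computation is routine, and the only places meriting a line of care are that $g$ is a priori only locally defined (dealt with by the tightness of $W_n$ pushing $X_n$ into $U$) and that tightness and Slutsky's theorem must be invoked in their two–dimensional forms, which are standard.
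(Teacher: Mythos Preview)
Your argument is correct and is precisely the standard delta-method proof the paper has in mind when it declares the proof ``omitted since it is straightforward''; the paper gives no proof of its own, so there is nothing to compare.
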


Applying the complex delta method for a complex function $f(z) := e^z$,
we get the following version of the central limit theorem.
\begin{lemma}\label{lem:CLTforGM}
Let $X_1, X_2, \ldots \sim C(\gamma)$ be independent.
Then it holds that
\begin{equation}\label{eq:CTLforGM}
	\frac{\sqrt{N}}{\sqrt{v}\gamma} \left( \prod_{j=1}^N X_j^{1/N} - \gamma \right) \to Z_{0,1},
\end{equation}
as $N \to \infty$ in law.
\end{lemma}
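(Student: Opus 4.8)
The plan is to combine Lemma~\ref{lem:standardCLT} with the complex delta method (the lemma immediately preceding the statement), applied to the exponential function. First I would set $b_N := \sqrt{N}/\sqrt{v}$, $m := \log\gamma$, and let $X_N := \frac{1}{N}\sum_{j=1}^N \log X_j$ be the arithmetic mean of the logarithms. Lemma~\ref{lem:standardCLT} says exactly that $b_N(X_N - m) \to Z_{0,1}$ in law as $N \to \infty$. Next I would take $f(z) := e^z$, which is holomorphic on all of $\C$ (in particular on any domain containing $m = \log\gamma$), with $f'(z) = e^z$, so that $f'(m) = e^{\log\gamma} = \gamma$.

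Applying the complex delta method with this $f$ gives
\[
b_N\bigl[ f(X_N) - f(m) \bigr] \to f'(m)\, Z_{0,1} = \gamma\, Z_{0,1}
\]
as $N \to \infty$ in law. Now I would unwind the two sides: $f(m) = e^{\log\gamma} = \gamma$, and
\[
f(X_N) = \exp\!\left\{ \frac{1}{N}\sum_{j=1}^N \log X_j \right\} = \prod_{j=1}^N \exp\!\left\{ \tfrac{1}{N}\log X_j \right\} = \prod_{j=1}^N X_j^{1/N},
\]
which is the geometric mean as defined in Remark~\ref{rem:geometric-mean}. Hence
\[
\frac{\sqrt{N}}{\sqrt{v}}\left( \prod_{j=1}^N X_j^{1/N} - \gamma \right) \to \gamma\, Z_{0,1}
\]
in law. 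Finally, since $\gamma = \mu + i\sigma \neq 0$ (as $\sigma > 0$), I would divide both sides by the nonzero constant $\gamma$; multiplication by the constant $1/\gamma$ is continuous, so by the continuous mapping theorem the left-hand side divided by $\gamma$ converges in law to $\frac{1}{\gamma}\cdot\gamma Z_{0,1} = Z_{0,1}$, which is precisely \eqref{eq:CTLforGM}.

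There is essentially no obstacle here: the result is a direct substitution into the delta method, and every ingredient (properness of $\log X - \log\gamma$, the value $v = 2\arg\gamma(\pi-\arg\gamma)$ of the variance, the nonvanishing of $\gamma$) has already been established in Corollary~\ref{cor:properness} and Proposition~\ref{prop:several-quantities}. The only point worth a word of care is the identification $f(X_N) = \prod_{j=1}^N X_j^{1/N}$, which relies on the convention (fixed in Section~\ref{sec:auxiliary-results}) that $X_j^{1/N} = \exp\{\frac{1}{N}\log X_j\}$ with a consistently chosen branch of $\log$; as long as the same branch is used in forming the $X_j^{1/N}$ as in the logarithmic mean, the identity holds termwise and the displayed limit follows. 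Since the statement of the lemma divides by $\gamma$ on the left rather than leaving $\gamma Z_{0,1}$ on the right, the last rescaling step is what turns the limit into the standard complex normal $Z_{0,1}$, which is the convenient normalisation for the confidence-region construction that follows.
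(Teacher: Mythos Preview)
Your argument is correct and coincides with the paper's own approach: the paper simply states that the lemma follows by applying the complex delta method with $f(z) := e^z$ to Lemma~\ref{lem:standardCLT}, which is exactly what you spell out in detail.
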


Unfortunately Lemma \ref{lem:CLTforGM} above could not be applied
to get a confidence disc for Cauchy distributed random variables.
We will derive the following variant of the central limit theorem.
\begin{lemma}\label{lem:CLT}
Let $X_1, X_2, \ldots \sim C(\gamma)$ be independent.
Setting
\begin{align}
	P_N &:= \prod_{j=1}^N X_j^{1/N}, \notag \\ \intertext{and}
	V_N &:= \frac{1}{N-1} \sum_{j=1}^N |\log X_j|^2 - \left|\frac{1}{N}\sum_{j=1}^N \log X_j \right|^2, \label{eq:sample-variance}\end{align}
which is an unbiased and consistent estimator for
$v := \V(\log X_1) = 2 \arg \gamma(\pi - \arg \gamma)$,
we have
\[
\frac{\sqrt{N}}{\sqrt{V_N} P_N} \left( P_N - \gamma \right) \to Z_{0,1},
\]
as $N \to \infty$ in law.
\end{lemma}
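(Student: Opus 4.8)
The plan is to deduce the statement from Lemma~\ref{lem:CLTforGM} by a Slutsky‑type studentization argument, exactly as one passes from a Gaussian ``$z$‑statistic'' to a ``$t$‑statistic''. Note first that $\gamma\neq0$ (because $\sigma>0$), that $v=2\arg\gamma(\pi-\arg\gamma)>0$ (because $0<\arg\gamma<\pi$), and that $V_N\geq0$ almost surely (indeed $V_N\geq\frac1N\sum_{j=1}^N|\log X_j|^2-\bigl|\frac1N\sum_{j=1}^N\log X_j\bigr|^2\geq0$ by Jensen's inequality for $z\mapsto|z|^2$), so that $\sqrt{V_N}$ is well defined. We may then write
\[
\frac{\sqrt{N}}{\sqrt{V_N}\,P_N}\bigl(P_N-\gamma\bigr)=A_N\cdot\frac{\sqrt{N}}{\sqrt{v}\,\gamma}\bigl(P_N-\gamma\bigr),\qquad A_N:=\frac{\sqrt{v}\,\gamma}{\sqrt{V_N}\,P_N}.
\]
By Lemma~\ref{lem:CLTforGM} the second factor converges in law to $Z_{0,1}$, so by Slutsky's theorem for $\C$‑valued (equivalently $\R^2$‑valued) random variables it suffices to prove $A_N\to1$ in probability.

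Since $\gamma\neq0$ and $v>0$, the map $(w,u)\mapsto\sqrt{v}\,\gamma/(\sqrt{u}\,w)$ is continuous on a neighbourhood of $(\gamma,v)$, so by the continuous mapping theorem it is enough to show $P_N\to\gamma$ and $V_N\to v$ in probability. The first is immediate from Lemma~\ref{lem:CLTforGM} itself: the left‑hand side of \eqref{eq:CTLforGM} converges in law and is therefore bounded in probability, whence $P_N-\gamma=\frac{\sqrt{v}\,\gamma}{\sqrt{N}}\cdot\frac{\sqrt{N}}{\sqrt{v}\,\gamma}(P_N-\gamma)\to0$ in probability because $\sqrt{v}\,\gamma/\sqrt{N}\to0$. (Equivalently, $\frac1N\sum_{j=1}^N\log X_j\to\log\gamma$ in probability by the weak law of large numbers, using $\E[\log X_1]=\log\gamma$ and $\V(\log X_1)=v<\infty$ from Corollary~\ref{cor:GM-unbiased} and Proposition~\ref{prop:several-quantities}\,(\ref{enum:log-variance}), and then $P_N=\exp\bigl(\frac1N\sum_{j=1}^N\log X_j\bigr)\to\gamma$ by continuity of $\exp$.) For $V_N\to v$, observe that $\E[|\log X_1|^2]=\V(\log X_1)+|\E[\log X_1]|^2=v+|\log\gamma|^2<\infty$; hence the weak law of large numbers gives $\frac1N\sum_{j=1}^N|\log X_j|^2\to v+|\log\gamma|^2$ and $\bigl|\frac1N\sum_{j=1}^N\log X_j\bigr|^2\to|\log\gamma|^2$ in probability, and since $\frac1{N-1}\sum_{j=1}^N|\log X_j|^2=\frac{N}{N-1}\cdot\frac1N\sum_{j=1}^N|\log X_j|^2$ with $\frac{N}{N-1}\to1$, subtracting yields $V_N\to v$. (The unbiasedness of $V_N$ noted in the statement is a separate, entirely elementary moment computation, independent of the limit theorem.)

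Combining these, $A_N\to\sqrt{v}\,\gamma/(\sqrt{v}\,\gamma)=1$ in probability, and Slutsky's theorem gives the conclusion. There is no real analytic obstacle here: this is the standard studentization step, and the only point requiring any care is the finiteness $\E[|\log X_1|^2]<\infty$, i.e.\ $v<\infty$ — which is exactly the hypothesis that makes the central limit theorems of Lemmas~\ref{lem:standardCLT} and~\ref{lem:CLTforGM} available at all. The real content of the lemma is conceptual: the normaliser $\sqrt{v}\,\gamma$ of Lemma~\ref{lem:CLTforGM} involves the unknown parameter $\gamma$ and so cannot be used to build a confidence region, whereas the normaliser $\sqrt{V_N}\,P_N$ here is a function of the data alone, so the displayed limit is \emph{pivotal} — precisely the form that will be inverted to produce the confidence disc of Theorem~\ref{thm:ApproximatingConfidenceDisc}.
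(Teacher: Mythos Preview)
Your argument is correct and follows exactly the route of the paper's own proof: show $P_N\to\gamma$ and $V_N\to v$ in probability, then apply Slutsky's theorem to Lemma~\ref{lem:CLTforGM}. The paper states this in one line without justifying the two convergences in probability, so your version simply fills in the details the authors omitted.
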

\begin{proof}
Since $V_N$ converges to $2 \arg \gamma ( \pi - \arg \gamma)$
and $P_N$ converges to $\gamma$ as $N \to \infty$
in probability, Lemma \ref{lem:CLTforGM} implies the conclusion.
\end{proof}

Now we can state a formula for our asymptotic confidence disc.

\begin{theorem}\label{thm:ApproximatingConfidenceDisc}
Let $X_1, X_2, \ldots, X_N$ be independent Cauchy random variables
with location $\mu \in \R$ and scale $\sigma > 0$.
Set $P_N$ and $V_N$ as in Lemma \ref{lem:CLT}.
Then, we have
\begin{equation}\label{eq:confidence-disc}
\lim_{N \to \infty} P\left(
 \gamma \in B\left( P_N, \frac{\sqrt{V_N}}{\sqrt{N}} R_\alpha |P_N| \right) \right) = 1 - \alpha,
\end{equation}
for $0 < \alpha \leq 1$,
where $R_\alpha = \sqrt{- \log \alpha}$ and
$B(p,r)$ denotes a disc in $\C$ with centre $p$ and radius $r$.
\end{theorem}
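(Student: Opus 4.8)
The plan is to extract the confidence disc directly from Lemma~\ref{lem:CLT} by converting the convergence-in-law statement about the standardized geometric mean into a probability statement about the event $\{\gamma \in B(P_N, r_N)\}$ for the appropriate random radius $r_N$. Writing $W_N := \dfrac{\sqrt{N}}{\sqrt{V_N}\,P_N}\left(P_N - \gamma\right)$, Lemma~\ref{lem:CLT} gives $W_N \to Z_{0,1}$ in law, where $Z_{0,1}$ is standard complex normal. The key observation is that $\gamma \in B\!\left(P_N, \dfrac{\sqrt{V_N}}{\sqrt{N}} R_\alpha |P_N|\right)$ is equivalent to $|P_N - \gamma| < \dfrac{\sqrt{V_N}}{\sqrt{N}} R_\alpha |P_N|$, i.e.\ (dividing by $\dfrac{\sqrt{V_N}}{\sqrt{N}}|P_N| > 0$) to $|W_N| < R_\alpha$. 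So the left-hand side of \eqref{eq:confidence-disc} is exactly $P(|W_N| < R_\alpha)$, and it suffices to show this converges to $1-\alpha$.

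First I would compute the distribution of $|Z_{0,1}|$. Since $\mathrm{Re}\,Z_{0,1}$ and $\mathrm{Im}\,Z_{0,1}$ are independent $N(0,1/2)$ variables, $2|Z_{0,1}|^2 = (\sqrt{2}\,\mathrm{Re}\,Z_{0,1})^2 + (\sqrt{2}\,\mathrm{Im}\,Z_{0,1})^2$ is a sum of two independent standard normal squares, hence $\chi^2_2$, which is $\mathrm{Exp}(1/2)$; equivalently $|Z_{0,1}|^2 \sim \mathrm{Exp}(1)$. Therefore $P(|Z_{0,1}| < R_\alpha) = P(|Z_{0,1}|^2 < R_\alpha^2) = 1 - e^{-R_\alpha^2}$, and with the choice $R_\alpha = \sqrt{-\log\alpha}$ this equals $1 - e^{\log\alpha} = 1-\alpha$, as desired. (For $\alpha = 1$ one gets $R_\alpha = 0$ and both sides are trivially $0$.)

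It then remains to pass from $W_N \to Z_{0,1}$ in law to $P(|W_N| < R_\alpha) \to P(|Z_{0,1}| < R_\alpha)$. By the continuous mapping theorem, $|W_N| \to |Z_{0,1}|$ in law in $\R$. Convergence in law implies convergence of the distribution functions at every continuity point of the limit distribution function; since $|Z_{0,1}|^2$ has an absolutely continuous distribution, so does $|Z_{0,1}|$, and in particular $t \mapsto P(|Z_{0,1}| \le t)$ is continuous at every $t \ge 0$, including $t = R_\alpha$. Hence $P(|W_N| < R_\alpha) = P(|W_N| \le R_\alpha) + o(1) \to P(|Z_{0,1}| \le R_\alpha) = 1 - \alpha$. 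This completes the argument.

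I do not anticipate a serious obstacle here: the statement is essentially a repackaging of Lemma~\ref{lem:CLT} together with the elementary fact that the squared modulus of a standard complex normal is unit-exponential. The only point requiring a word of care is the continuity of the limiting c.d.f.\ at the threshold $R_\alpha$, which is what licenses replacing the strict inequality defining the open disc by the closed one and invoking weak convergence; since $|Z_{0,1}|$ has no atoms this is immediate, but it should be stated explicitly rather than glossed over.
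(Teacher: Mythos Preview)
Your proof is correct and follows essentially the same route as the paper: identify the disc event with $|W_N| \le R_\alpha$, note $P(|Z_{0,1}| \le R_\alpha) = 1-\alpha$, and apply Lemma~\ref{lem:CLT}. Your version is simply more explicit in deriving $P(|Z_{0,1}| \le R_\alpha) = 1-\alpha$ via $|Z_{0,1}|^2 \sim \mathrm{Exp}(1)$ and in justifying the passage to the limit through continuity of the limiting distribution, both of which the paper leaves unstated.
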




\begin{proof}
We first note that
$P(|Z_{0,1}| \leq \sqrt{-\log \alpha}) = 1 - \alpha$.
Combining it with Lemma \ref{lem:CLT} asserts that
\[ \lim_{N \to \infty}
P\left( \left| \frac{\sqrt{N}}{\sqrt{V_N} P_N} (P_N - \gamma) \right| \leq R_\alpha \right)
= 1 - \alpha,
\]
which immediately leads to the conclusion.
\end{proof}

A similar argument enables us to derive an
asymptotic confidence square that is slightly smaller
than the disc.

\begin{corollary}\label{cor:confidence-square}
	We denote by $Q(p,r)$ a square in the complex plane whose centre is $r$
	and the length of one side is $2r$, namely,
	$Q(p,r) = [p-r, p+r] \times i[p-r, p+r]$.
	Let $\rho_\alpha$ be the upper $\alpha$-quantile of standard real normal distribution, that is, $\int_{-\infty}^{\rho_\alpha} \frac{1}{\sqrt{2\pi}} \exp\{ -x^2/2 \}\,dx = 1 - \alpha$.
	Under same settings of Theorem \ref{thm:ApproximatingConfidenceDisc},
	and $\beta = \frac{1}{2}(1 - \sqrt{1 - \alpha})$,	we have
	\[\lim_{N \to \infty}
	P\left(
	\gamma \in Q\left(P_N, \frac{\sqrt{V_N}}{\sqrt{2N}}|P_N| \rho_{\beta}\right)
	\right) = 1 - \alpha
	\]
\end{corollary}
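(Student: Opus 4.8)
The plan is to reduce the square statement to the componentwise version of the complex central limit theorem already proved in Lemma \ref{lem:CLT}. Writing $W_N := \frac{\sqrt{N}}{\sqrt{V_N}\,P_N}(P_N - \gamma)$, Lemma \ref{lem:CLT} gives $W_N \to Z_{0,1}$ in law, and since the real and imaginary parts of $Z_{0,1}$ are independent with common law $N(0,1/2)$, the pair $(\sqrt2\,\mathrm{Re}\,W_N,\ \sqrt2\,\mathrm{Im}\,W_N)$ converges in law to a pair of independent standard real normals. The event $\{\gamma \in Q(P_N, r_N)\}$ with $r_N := \frac{\sqrt{V_N}}{\sqrt{2N}}|P_N|\rho_\beta$ says exactly that $|\mathrm{Re}(P_N-\gamma)| \le r_N$ and $|\mathrm{Im}(P_N-\gamma)| \le r_N$ simultaneously. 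The first step is therefore to rewrite this in terms of $W_N$: since $P_N - \gamma = \frac{\sqrt{V_N}\,P_N}{\sqrt N} W_N$, one has $\mathrm{Re}(P_N - \gamma) + i\,\mathrm{Im}(P_N-\gamma) = \frac{\sqrt{V_N}}{\sqrt N}(\mathrm{Re}(P_N W_N) + i\,\mathrm{Im}(P_N W_N))$, so the square event becomes $\{|\mathrm{Re}(P_N W_N)| \le \frac{|P_N|}{\sqrt2}\rho_\beta\} \cap \{|\mathrm{Im}(P_N W_N)| \le \frac{|P_N|}{\sqrt2}\rho_\beta\}$.

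The second step handles the factor $P_N$ multiplying $W_N$ inside the real and imaginary parts. Since $P_N \to \gamma \neq 0$ in probability, writing $P_N = |P_N| e^{i\Theta_N}$ the rotation angle $\Theta_N$ converges in probability to $\arg\gamma$; multiplication by $e^{i\Theta_N}$ is a (random) rotation, and because the limiting law $Z_{0,1}$ is rotation-invariant, $e^{i\Theta_N} W_N \to Z_{0,1}$ in law as well (apply Slutsky's theorem to the continuous map $(\theta,w)\mapsto e^{i\theta}w$, or simply note rotational invariance of the standard complex normal). Dividing through by $|P_N|$, the square event is equivalent to
\[
\left\{ |\mathrm{Re}(e^{i\Theta_N} W_N)| \le \tfrac{\rho_\beta}{\sqrt2} \right\} \cap \left\{ |\mathrm{Im}(e^{i\Theta_N} W_N)| \le \tfrac{\rho_\beta}{\sqrt2} \right\}.
\]
By the continuous mapping theorem the $\P$ of this set converges to $\P(|\mathrm{Re}\,Z_{0,1}| \le \rho_\beta/\sqrt2,\ |\mathrm{Im}\,Z_{0,1}|\le\rho_\beta/\sqrt2)$; one should check the boundary of the limiting rectangle has $Z_{0,1}$-measure zero, which is immediate since $Z_{0,1}$ has a density.

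The third step is the elementary probability computation that fixes the constant $\beta$. Because $\mathrm{Re}\,Z_{0,1}$ and $\mathrm{Im}\,Z_{0,1}$ are independent and each $\sqrt2$ times it is standard normal, the limiting probability factors as $\P(|N(0,1)| \le \rho_\beta)^2 = (1-2\beta)^2$, using the definition of the upper $\beta$-quantile $\rho_\beta$. Setting $(1-2\beta)^2 = 1-\alpha$ gives $1 - 2\beta = \sqrt{1-\alpha}$, i.e. $\beta = \frac12(1-\sqrt{1-\alpha})$, exactly as in the statement; this closes the proof. I expect no serious obstacle here: the only point requiring a word of care is the rotation argument in the second step (justifying that $e^{i\Theta_N}W_N \to Z_{0,1}$), and the fact that the limiting square is a continuity set — both are routine given the rotational invariance and absolute continuity of the standard complex normal, which may simply be remarked rather than belabored.
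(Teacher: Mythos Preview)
Your proof is correct and rests on the same key observation as the paper's: the rotation invariance of $Z_{0,1}$ lets one replace the complex scaling factor by its modulus, after which the square event factors into independent real-normal tail probabilities yielding $(1-2\beta)^2 = 1-\alpha$. The only cosmetic difference is the order of operations: the paper first rotates the \emph{deterministic} coefficient in \eqref{eq:CTLforGM} to obtain $\frac{\sqrt{N}}{\sqrt{v}\,|\gamma|}(P_N-\gamma)\to Z_{0,1}$ and then invokes the Slutsky step of Lemma~\ref{lem:CLT}, whereas you start from Lemma~\ref{lem:CLT} and handle the resulting \emph{random} rotation $e^{i\Theta_N}$ via Slutsky plus rotation invariance---both routes are equally short and valid.
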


\begin{proof}
	Recall that $Z_{0,1}$ is rotation invariant in law, that is,
	for any complex number $w$ with $|w| = 1$,
	$Z_{0,1}$ and $w Z_{0,1}$ share a same distribution.
	This fact guarantees that we may take an absolute value in
	the coefficient of \eqref{eq:CTLforGM}, that is, we have
	\[
		\frac{\sqrt{N}}{\sqrt{v}|\gamma|} \left( P_N - \gamma \right) \to Z_{0,1}
	\]
	as $N \to \infty$ in law.
	Therefore, a same argument with Lemma \ref{lem:CLT} leads
	\[\lim_{N \to \infty}
	P\left(
		\gamma \in Q\left(P_N, \frac{\sqrt{V_N}|P_N|}{\sqrt{2N}} \rho_{\beta}\right)
		\right)
		= P\left( Z_{0,1} \in
		Q\left(0, \rho_{\beta}/\sqrt{2}\right)
		\right).
	\]
	Since $\mathrm{Re}(Z_{0,1})$ and $\mathrm{Im}(Z_{0,1})$ are independent,
	and obeys $N(0,1/2)$, the right-hand side equals to $(1 - 2\beta)^2 = 1 - \alpha$.
\end{proof}

We can apply the idea of Corollary \ref{cor:confidence-square}
to get confidence intervals for $\mu$ and $\sigma$, respectively.

\begin{corollary}\label{cor:confidence-interval}
	Under the same settings of Corollary \ref{cor:confidence-square},
	we have
	\begin{align*}
		\lim_{N\to\infty} P\left( \mathrm{Re}(P_N) - \frac{\sqrt
		{V_N}|P_N|}{\sqrt{2N}} \rho_{\alpha/2} \leq
		\mu \leq
		\mathrm{Re}(P_N) + \frac{\sqrt
		{V_N}|P_N|}{\sqrt{2N}} \rho_{\alpha/2} \right) = 1 - \alpha \\
		\lim_{N\to\infty} P\left( \mathrm{Im}(P_N) - \frac{\sqrt
		{V_N}|P_N|}{\sqrt{2N}} \rho_{\alpha/2} \leq
		\sigma \leq
		\mathrm{Im}(P_N) + \frac{\sqrt
		{V_N}|P_N|}{\sqrt{2N}} \rho_{\alpha/2} \right) = 1 - \alpha
	\end{align*}
\end{corollary}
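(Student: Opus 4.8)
The plan is to reduce both interval statements to a one-dimensional central limit theorem obtained by projecting the complex CLT of Corollary~\ref{cor:confidence-square} onto the real and imaginary axes. First I would recall that the rotation-invariance argument already used in the proof of Corollary~\ref{cor:confidence-square}, together with the replacement of $v$ by $V_N$ and of $|\gamma|$ by $|P_N|$ (both justified, à la Slutsky, by the consistency assertions in Lemma~\ref{lem:CLT}), gives
\[
W_N := \frac{\sqrt{N}}{\sqrt{V_N}\,|P_N|}\left( P_N - \gamma \right) \longrightarrow Z_{0,1}
\]
in law as $N \to \infty$.

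Next, since the coordinate projections $z \mapsto \mathrm{Re}(z)$ and $z \mapsto \mathrm{Im}(z)$ are continuous, the continuous mapping theorem yields $\mathrm{Re}(W_N) \to \mathrm{Re}(Z_{0,1})$ and $\mathrm{Im}(W_N) \to \mathrm{Im}(Z_{0,1})$ in law, and by definition of the standard complex normal both limits obey $N(0,1/2)$. Writing $\gamma = \mu + i\sigma$, so that $\mathrm{Re}(P_N - \gamma) = \mathrm{Re}(P_N) - \mu$ and $\mathrm{Im}(P_N - \gamma) = \mathrm{Im}(P_N) - \sigma$, and multiplying through by $\sqrt{2}$ to standardise the variance, I obtain
\[
\frac{\sqrt{2N}}{\sqrt{V_N}\,|P_N|}\bigl( \mathrm{Re}(P_N) - \mu \bigr) \longrightarrow N(0,1)
\quad\text{and}\quad
\frac{\sqrt{2N}}{\sqrt{V_N}\,|P_N|}\bigl( \mathrm{Im}(P_N) - \sigma \bigr) \longrightarrow N(0,1).
\]

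Finally I would use continuity of the standard normal c.d.f.\ at $\pm\rho_{\alpha/2}$: since $\rho_{\alpha/2}$ is the upper $(\alpha/2)$-quantile, $P(|N(0,1)| \le \rho_{\alpha/2}) = (1-\alpha/2) - (\alpha/2) = 1-\alpha$, and the event that the first standardised quantity above lies in $[-\rho_{\alpha/2},\rho_{\alpha/2}]$ is exactly the event that $\mu$ lies in the displayed interval; likewise for $\sigma$. Hence each probability in the statement converges to $1-\alpha$. There is essentially no hard step here: the only points deserving care are that the two marginal intervals are derived one coordinate at a time and carry no joint-coverage guarantee, that the factor $\sqrt{2}$ comes solely from $\mathrm{Var}(\mathrm{Re}\,Z_{0,1}) = 1/2$, and that $\rho_{\alpha/2}$ replaces the $\rho_\beta$ of Corollary~\ref{cor:confidence-square} simply because we now use a symmetric two-sided interval for a single real coordinate.
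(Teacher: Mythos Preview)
Your argument is correct and matches the paper's own proof: both take the rotation-invariant form of the CLT established in Corollary~\ref{cor:confidence-square}, replace $v,|\gamma|$ by $V_N,|P_N|$ via Slutsky, and then read off the coverage probability from the marginal $N(0,1/2)$ law of $\mathrm{Re}\,Z_{0,1}$ (respectively $\mathrm{Im}\,Z_{0,1}$). The only cosmetic difference is that the paper phrases the last step as $P\bigl(Z_{0,1}\in[-\rho_{\alpha/2}/\sqrt{2},\rho_{\alpha/2}/\sqrt{2}]\times i\mathbb{R}\bigr)=1-\alpha$ rather than invoking the continuous mapping theorem on the projection, but this is the same computation.
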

\begin{proof}
	Since we have $P(Z_{0,1} \in [-\rho_{\alpha/2}/\sqrt{2}, \rho_{\alpha/2}/\sqrt{2}] \times i \mathbb{R})
	= 1 - \alpha$,
	the results follow from the same computation with Corollary \ref{cor:confidence-square}.
\end{proof}


\section{Numerical examples}\label{sec:NumericExamples}
In this section we will show several numerical examples
of Theorem \ref{thm:ApproximatingConfidenceDisc}.

\subsection{\mathversion{bold}$\mu = 0, \sigma = 1, N=1000, \alpha =0.05$, 10 trials}
\begin{figure}[htbp]
	\begin{center}
	\setbox0=\hbox{\includegraphics[width=.4\linewidth]{./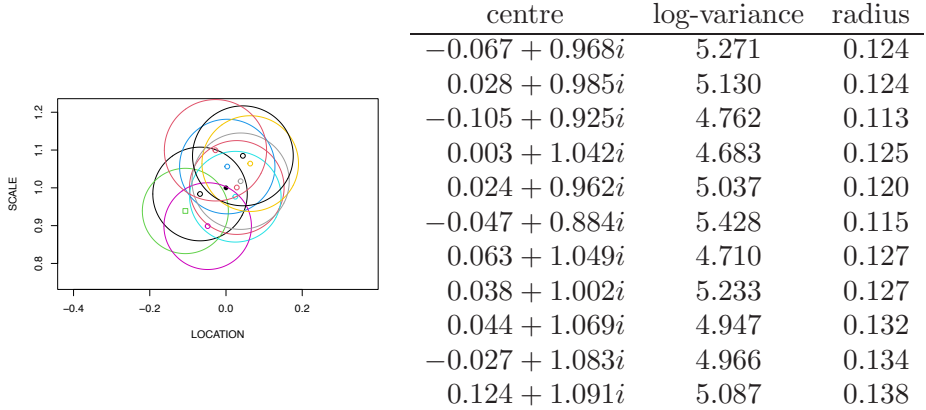}}
	\raisebox{-.5\ht0}{\copy0}
	\resizebox{.55\linewidth}{!}{
	\begin{tabular}{rcrc}
	\multicolumn{1}{c}{centre} & log-variance & radius \\
	\hline
	$-0.067+0.968i$ & $5.271$ & $0.124$ \\
	$0.028+0.985i$  & $5.130$ & $0.124$ \\
	$-0.105+0.925i$ & $4.762$ & $0.113$ \\
	$0.003+1.042i$  & $4.683$ & $0.125$ \\
	$0.024+0.962i$  & $5.037$ & $0.120$ \\
	$-0.047+0.884i$ & $5.428$ & $0.115$ \\
	$0.063+1.049i$  & $4.710$ & $0.127$ \\
	$0.038+1.002i$  & $5.233$ & $0.127$ \\
	$0.044+1.069i$  & $4.947$ & $0.132$ \\
	$-0.027+1.083i$ & $4.966$ & $0.134$ \\
	$ 0.124+1.091i$ & $5.087$ & $0.138$
%
%
	\end{tabular}}
	\end{center}
	\caption{$\mu = 0,\ \sigma = 1,\ N=1000,\ \alpha=0.05$, 10 trials}
	\label{fig:N=1000,T=10,A=0.05}
\end{figure}
Figure \ref{fig:N=1000,T=10,A=0.05} shows 10 confidence discs;
each of the samples  has of size 1,000.
Namely, we generated samples of
$X^1_1, X^1_2,\ldots,X^1_{1000}, X^2_1, X^2_2, \ldots, X^2_{1000}, \ldots,
X^{10}_1, X^{10}_2, \ldots, X^{10}_{1000} \sim C(0,1)$.
Then we computed the geometric mean $P^i_{1000} := \prod_{j=1}^{1000} (X^i_{j})^{1/1000}$ by the formula $P^i_{1000} = \exp\{ \frac{1}{1000} \sum_{j=1}^{1000} \log X^i_j \}$,
and sample variance $V^i_{1000}$ by \eqref{eq:sample-variance}.
We drew 10 discs $B^1, \ldots, B^{10}$
using these quantities and \eqref{eq:confidence-disc}.
A symbol $\bullet$ indicates the true value $(\mu, \sigma) = (0,1)$.
Symbols $\circ$ point the centre of the disc containing the true value,
while $\square$ is the point that fails.

\subsection{\mathversion{bold}$\mu = 5, \sigma = 1, N=1000, \alpha =0.05$, 10 trials}
Figure \ref{fig:N=1000,T=10,MU=5,SIGMA=1,A=0.05} also shows
the same size of trials and samples, but
$\mu = 5$ and $\sigma = 1$ for larger true values;
we fix $\sigma = 1$ to compare with Figure \ref{fig:N=1000,T=10,A=0.05} easily.
Note that, since $\arg(5+i)=\tan^{-1}\frac{1}{5} \sim 0.197$, $\V(\log X) \sim 1.162$. Note that $\V(\log X)$ is small if $|\mu|$ is large but
the radius of the confidence discs increases as $|\mu|$ increases.

\begin{figure}[htbp]
	\begin{center}
	\setbox0=\hbox{\includegraphics[width=.4\linewidth]{./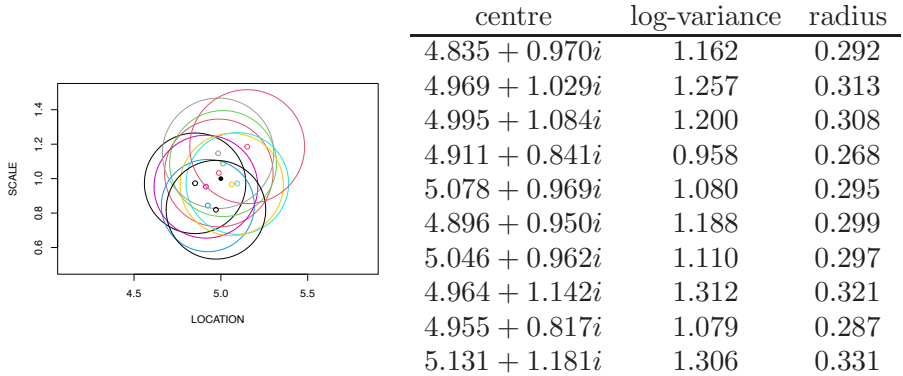}}
	\raisebox{-.5\ht0}{\copy0}
	\resizebox{.55\linewidth}{!}{
	\begin{tabular}{cccc}
	centre & log-variance & radius \\
	\hline
	$4.835+0.970i$ & $1.162$ & $0.292$ \\
	$4.969+1.029i$ & $1.257$ & $0.313$ \\
	$4.995+1.084i$ & $1.200$ & $0.308$ \\
	$4.911+0.841i$ & $0.958$ & $0.268$ \\
	$5.078+0.969i$ & $1.080$ & $0.295$ \\
	$4.896+0.950i$ & $1.188$ & $0.299$ \\
	$5.046+0.962i$ & $1.110$ & $0.297$ \\
	$4.964+1.142i$ & $1.312$ & $0.321$ \\
	$4.955+0.817i$ & $1.079$ & $0.287$ \\
	$5.131+1.181i$ & $1.306$ & $0.331$
%
%
	\end{tabular}}
	\end{center}
	\caption{$\mu = 5, \sigma = 1$, $N=1000, \alpha=0.05$, 10 trials}
	\label{fig:N=1000,T=10,MU=5,SIGMA=1,A=0.05}
\end{figure}

\subsection{\mathversion{bold}$\mu = 0, \sigma = 1, N=30, \alpha = 0.05$, 1000 trials}
Since the formula \eqref{eq:confidence-disc} to compute our confidence disc
is based on a central limit theorem (Theorem \ref{lem:CLT}).
Therefore, we may fail to guess the parameters if $N$ is small.
\begin{figure}[htbp]
	\includegraphics[width=.4\linewidth]{./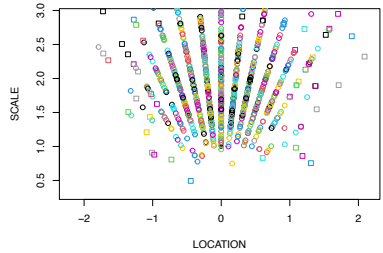}
	\caption{$\mu = 0$, $\sigma = 1$, $N=30$, $\alpha = 0.05$, 1000 trials}
	\label{fig:N=30,TRIAL=1000}
\end{figure}
Figure \ref{fig:N=30,TRIAL=1000} (omitting to draw circles) shows the case $N = 30$ with $(\mu, \sigma) = (0,1)$.
In this example, computed $\alpha = 0.05$, only 925 trials of 1,000 contain the true value.

\subsection{\mathversion{bold}Large $\mu$ and small $\sigma$}\label{subsec:sbstract-median}
When $|\mu|$ is much larger than $\sigma$, all data may have the same sign.
In such a case, there is a possibility that
the geometric mean has the vanishing imaginary part so that
we shall guess $\sigma$ as zero.
\begin{figure}[htbp]
	\begin{center}
	\setbox0=\hbox{\includegraphics[width=.4\linewidth]{./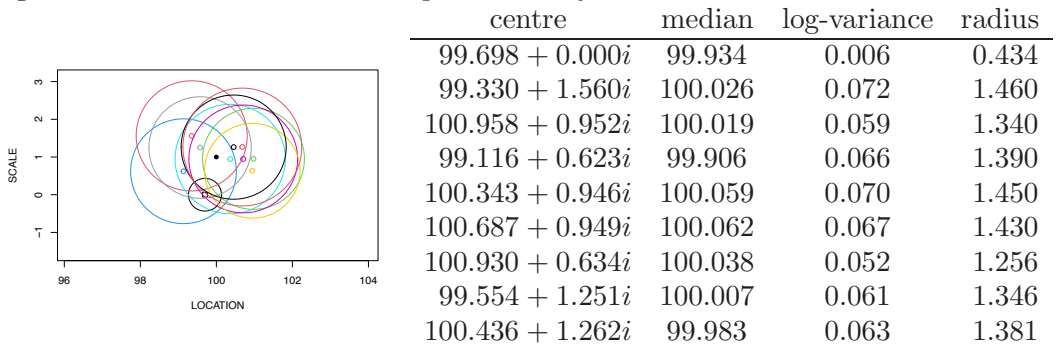}}
	\raisebox{-.5\ht0}{\copy0}
	\resizebox{.55\linewidth}{!}{
	\begin{tabular}{rccrc}
	\multicolumn{1}{c}{centre} & median & log-variance & radius \\
	\hline
	$ 99.698+0.000i$ & $ 99.934$ & $0.006$ & $0.434$ \\
	$ 99.330+1.560i$ & $100.026$ & $0.072$ & $1.460$ \\
	$100.958+0.952i$ & $100.019$ & $0.059$ & $1.340$ \\
	$ 99.116+0.623i$ & $ 99.906$ & $0.066$ & $1.390$ \\
	$100.343+0.946i$ & $100.059$ & $0.070$ & $1.450$ \\
	$100.687+0.949i$ & $100.062$ & $0.067$ & $1.430$ \\
	$100.930+0.634i$ & $100.038$ & $0.052$ & $1.256$ \\
	$ 99.554+1.251i$ & $100.007$ & $0.061$ & $1.346$ \\
	$100.436+1.262i$ & $ 99.983$ & $0.063$ & $1.381$
%
%
	\end{tabular}}
	\end{center}
	\caption{$\mu = 100, \sigma = 1$, $N=1000, \alpha=0.05$, 10 trials}
	\label{fig:N=1000,T=10,MU=100,SIGMA=1,A=0.05}
\end{figure}
Figure \ref{fig:N=1000,T=10,MU=100,SIGMA=1,A=0.05} shows such a case.
One practical way to avoid the situation is to subtract a constant
from each datum.
It is clear that, putting $Y_i := X_i - c$, an estimate using $Y_i$'s plus
$c$ is equal to that of $X_i$'s.
We also emphasise that subtracting a constant may decrease the absolute value
of the geometric mean so that our estimated discs may be smaller, i.e.,
we may improve the estimation.
 However, it is impossible to get
a suitable constant to make the data consist of both positive and negative
values.
Therefore, practically speaking, it may be a candidate to subtract
the median
$\med(X_1, \ldots, X_N)$ from all $X_i$'s. Then the data will consist of same
numbers of positive and negative ones.
\begin{figure}[htbp]
	\begin{center}
	\setbox0=\hbox{\includegraphics[width=.4\linewidth]{./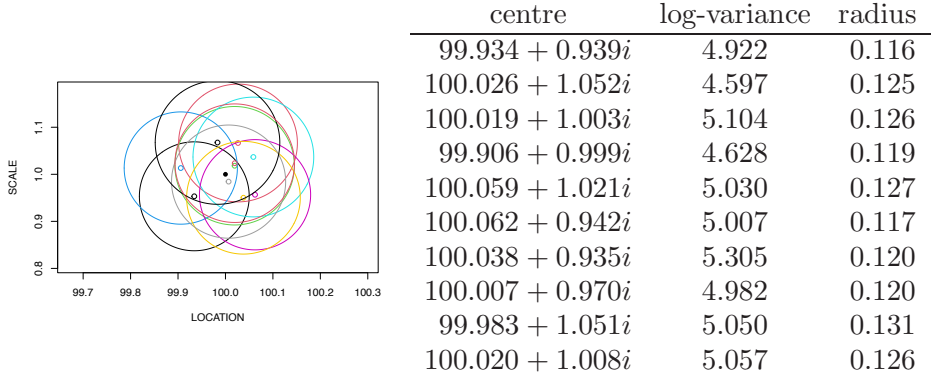}}
	\raisebox{-.5\ht0}{\copy0}
	\resizebox{.55\linewidth}{!}{
	\begin{tabular}{rcrc}
	\multicolumn{1}{c}{centre} & log-variance & radius \\
	\hline
	$ 99.934+0.939i$ & $4.922$ & $0.116$ \\
	$100.026+1.052i$ & $4.597$ & $0.125$ \\
	$100.019+1.003i$ & $5.104$ & $0.126$ \\
	$ 99.906+0.999i$ & $4.628$ & $0.119$ \\
	$100.059+1.021i$ & $5.030$ & $0.127$ \\
	$100.062+0.942i$ & $5.007$ & $0.117$ \\
	$100.038+0.935i$ & $5.305$ & $0.120$ \\
	$100.007+0.970i$ & $4.982$ & $0.120$ \\
	$ 99.983+1.051i$ & $5.050$ & $0.131$ \\
	$100.020+1.008i$ & $5.057$ & $0.126$ \\
	%
	%
	\end{tabular}}
	\end{center}
	\caption{Same data with Figure \ref{fig:N=1000,T=10,MU=100,SIGMA=1,A=0.05} with subtracting the median}
	\label{fig:N=1000,T=10,MU=100,SIGMA=1,A=0.05,SUBTRACT}
\end{figure}
Figure \ref{fig:N=1000,T=10,MU=100,SIGMA=1,A=0.05,SUBTRACT} shows the
effect of such manipulations.
Let us, however, mention that $X_i - \med(X_1,\ldots,X_N)$ are
neither Cauchy nor independent though the central limit theorem may still hold.
It seems difficult for us to get a suitable random variable to subtract
from each datum.
Figure \ref{fig:N=1000,T=10,MU=100,SIGMA=1,A=0.05,SUBTRACT}
is shown just for convenience;
we will not provide any detailed mathematically reasonable explanation
for this sort of manipulations.

\subsection{Outlier and Gaussian estimation}

Here we deal with one-dimensional location-scale families and consider the estimation of the location when the scale is not known.
It is common to assume the data are Gaussian distributed
if the shape of the frequency forms a bell shape.
But when the data have some outliers, using
$t$-distributions may give an inappropriate confidence interval.
But it should be noted that the Cauchy distribution often gives such
``outliers'' due to its heavy tail of the density
so that we may expect that an inference that is applicable to the Cauchy
distribution is more robust.

Here we show a numerical example, which consists of 10 samples;
each of them contains 100 values from a standard normal distribution.
Then we replace each 100th datum with ``5'' to mimic an outlier.
Tables \ref{tab:outlier-tdistribution} and \ref{tab:outlier-theorem42} below exhibit these differences.
The left parts of those tables are the original data, and the right parts are the modified data containing the outliers.

Table \ref{tab:outlier-tdistribution} shows how such contamination varies the confidence intervals
when we use the $t$-distribution with the degree of freedom 99,
while Table \ref{tab:outlier-theorem42} shows those by using Corollary \ref{cor:confidence-interval}.
Here the AM and GM denote the arithmetic mean and the real part of the geometric mean,
respectively.

\begin{table}[htbp]
	\[
	\begin{array}{c|rrr||rrr}
	\text{sample} & \multicolumn{1}{c}{\text{AM}}
								& \multicolumn{1}{c}{\text{radius}}
								& \multicolumn{1}{c}{\text{interval}}
								& \multicolumn{1}{c}{\text{AM}}
								& \multicolumn{1}{c}{\text{radius}}
								& \multicolumn{1}{c}{\text{interval}} \\
	\hline
	1 & -0.049 & 0.200 & [-0.249, 0.150] & -0.002 & 0.223 & [-0.225, 0.221] \\
	2 & 0.061  & 0.165 & [-0.103, 0.226] & 0.116  & 0.191 & [-0.075, 0.308] \\
	3 & 0.075  & 0.197 & [-0.122, 0.271] & 0.127  & 0.219 & [-0.093, 0.346] \\
	4 & -0.076 & 0.185 & [-0.261, 0.109] & -0.010 & 0.208 & [-0.219, 0.198] \\
	5 & -0.001 & 0.219 & [-0.221, 0.218] & 0.029  & 0.237 & [-0.209, 0.266] \\
	6 & 0.042  & 0.222 & [-0.180, 0.264] & 0.105  & 0.241 & [-0.136, 0.346] \\
	7 & -0.124 & 0.199 & [-0.323, 0.074] & -0.062 & 0.222 & [-0.285, 0.160] \\
	8 & 0.094  & 0.228 & [-0.134, 0.322] & 0.147  & 0.248 & [-0.101, 0.395] \\
	9 & -0.091 & 0.181 & [-0.272, 0.091] & -0.032 & 0.207 & [-0.238, 0.175] \\
	10 & 0.180  & 0.203 & [-0.024, 0.383] & 0.243  & 0.223 & [0.020, 0.465]
	\end{array}
	\]
	\caption{outlier's effect for $t$-distribution}
	\label{tab:outlier-tdistribution}
\end{table}

\begin{table}[htbp]
	\[
	\begin{array}{c|rrr||rrr}
	\text{sample}
			& \multicolumn{1}{c}{\text{GM}}
			& \multicolumn{1}{c}{\text{radius}}
			& \multicolumn{1}{c}{\text{interval}}
			& \multicolumn{1}{c}{\text{GM}}
			& \multicolumn{1}{c}{\text{radius}}
			& \multicolumn{1}{c}{\text{interval}} \\
	\hline
	1 & 0.017  & 0.143 & [-0.126,0.160] & 0.017  & 0.148 & [-0.130,0.165] \\
	2 & 0.025  & 0.110 & [-0.085,0.135] & 0.038  & 0.113 & [-0.075,0.152] \\
	3 & 0.032  & 0.136 & [-0.104,0.168] & 0.050  & 0.141 & [-0.091,0.191] \\
	4 & -0.131 & 0.138 & [-0.269,0.006] & -0.117 & 0.140 & [-0.257,0.024] \\
	5 & -0.061 & 0.163 & [-0.224,0.102] & -0.061 & 0.165 & [-0.227,0.104] \\
	6 & 0.017  & 0.150 & [-0.133,0.167] & 0.034  & 0.153 & [-0.119,0.187] \\
	7 & 0.016  & 0.137 & [-0.121,0.153] & 0.032  & 0.140 & [-0.108,0.172] \\
	8 & 0.099  & 0.162 & [-0.063,0.261] & 0.122  & 0.167 & [-0.046,0.289] \\
	9 & -0.155 & 0.123 & [-0.278,-0.032] & -0.144 & 0.126 & [-0.270,-0.018] \\
	10 & 0.206  & 0.162 & [0.044,0.368] & 0.227  & 0.165 & [0.063,0.392]
	\end{array}
	\]
	\caption{outlier's effect for Corollary \ref{cor:confidence-interval}}
	\label{tab:outlier-theorem42}
\end{table}

It can be seen that the confidence intervals
computed using Corollary \ref{cor:confidence-interval}
are not so affected by outliers.

\subsection{Remark about subtraction}

As we see in subsection \ref{subsec:sbstract-median} above,
the manipulation of subtracting the median works well when the location is far from the origin.
However, subtraction entails a delicate problem.

Let $\mu$ and $\sigma$ be the real and imaginary parts of $\gamma$.
Consider
$$Z_N := Y_N + \prod_{i=1}^{N} (X_{i} - Y_{N})^{1/N}.$$
There is a sequence of random variable $Y_N$ such that $Y_N \to \mu$ a.s. and $Z_N$ in the above does {\it not} converge to $\gamma$ in probability as $N \to \infty$.

This occurs if we let $Y_N := X^{(\lfloor N/2 \rfloor + 1)}$, where $\{X_1, \cdots, X_N \} = \{X^{(1)} \le \cdots \le X^{(N)}\}$ and $\lfloor r \rfloor$ denotes the integer part of $r \in \mathbb{R}$.
In that case we see that
$Z_N = Y_N$
and in particular $\textup{Im}(Z_N) = 0$.
$Y_N$ is the median of $\{X_1, \cdots, X_N \}$ if $N$ is {\it odd}.
In the above subsection, we deal with the case that $N$ is {\it even}.

Let $I$ be a closed interval containing the location $\mu$ in its interior.
Then,
$$\sup_{\theta \in I} \left| \theta +  \prod_{j=1}^{N} (X_{j} - \theta)^{1/N}  - \gamma  \right| \ge \sigma. $$

Since $P(Y_N \in I) \to 1, \ N \to \infty$,
we see that
$$P\left(\sup_{\theta \in I} \left| \theta +  \prod_{j=1}^{N} (X_{j} - \theta)^{1/N}  - \gamma \right| \ge \sigma\right) \to 1, \ \ N \to \infty.$$

We also have the following.

\begin{proposition}\label{prop:uniformintegrable}
Let $I$ be a closed interval.
Then,
$$\left(\sup_{\theta \in I} \left| \theta +  \prod_{j=1}^{N} (X_{j} - \theta)^{1/N}  - \gamma \right|\right)_N$$
is uniformly integrable.
\end{proposition}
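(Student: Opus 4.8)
The plan is to derive uniform integrability from an $L^p$-bound that is uniform in $N$ for some $p>1$: recall that an $L^p$-bounded family with $p>1$ is uniformly integrable, since $\E\bigl[|Z|\,1_{\{|Z|>K\}}\bigr]\le K^{1-p}\E[|Z|^p]$, and that adjoining finitely many integrable random variables to a uniformly integrable family preserves the property. So it will be enough to produce one $p>1$ with $\sup_{N\ge 3}\E[W_N^p]<\infty$, where $W_N$ denotes the random variable in the statement; the estimate below also shows $W_N$ is integrable for each $N\ge 2$, which takes care of the remaining small-$N$ terms. The first move is a deterministic reduction. Writing $I=[a,b]$, $C:=\max\{|a|,|b|\}$ and $D:=\sup_{\theta\in I}|\theta-\gamma|=\max\{|a-\gamma|,|b-\gamma|\}$ (the last equality because $\theta\mapsto|\theta-\gamma|^2=(\theta-\mu)^2+\sigma^2$ is convex), and using $|z^{1/N}|=|z|^{1/N}$ for every branch of the power together with $|X_j-\theta|\le|X_j|+C$ for all $\theta\in I$, one gets
\[
W_N\le D+\sup_{\theta\in I}\prod_{j=1}^N|X_j-\theta|^{1/N}\le D+\prod_{j=1}^N\bigl(|X_j|+C\bigr)^{1/N}=:D+G_N ,
\]
so it suffices to bound $\E[G_N^p]$ uniformly in $N$.

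Next, by independence $\E[G_N^p]=\psi(p/N)^N$ where $\psi(q):=\E\bigl[(|X_1|+C)^q\bigr]$. Using $(s+t)^q\le s^q+t^q$ for $s,t\ge0$, $0<q\le1$, and Proposition~\ref{prop:several-quantities}\,(\ref{enum:abs-p-moment}) (finiteness of $\E[|X_1|^q]$ for $|q|<1$), one has $\psi(q)\le\E[|X_1|^q]+C^q<\infty$ for $0\le q<1$; in particular $\psi(1/2)<\infty$. The key point is then that $q\mapsto\log\psi(q)$ is the cumulant generating function of $\log(|X_1|+C)$, hence convex on $[0,1)$ with value $0$ at $q=0$, so $\log\psi(q)\le 2q\log\psi(1/2)$ for $0\le q\le1/2$, i.e. $\psi(q)\le\psi(1/2)^{2q}$. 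Taking $p=3/2$, for every $N\ge3$ we have $p/N\le1/2$ and therefore
\[
\E[G_N^{3/2}]=\psi\bigl(3/(2N)\bigr)^N\le\bigl(\psi(1/2)^{3/N}\bigr)^N=\psi(1/2)^{3}<\infty .
\]
Hence $\sup_{N\ge3}\E[W_N^{3/2}]\le 2^{1/2}\bigl(D^{3/2}+\psi(1/2)^{3}\bigr)<\infty$, while $\E[W_N]\le D+\psi(1/N)^N<\infty$ for $N\ge2$ since $1/N<1$; combining this with the first paragraph gives the assertion.

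The crux is the control of $\psi(q)^N$ as $N\to\infty$, equivalently of $\psi(q)^{1/q}$ as $q\downarrow 0$. One is tempted to Taylor-expand $\psi$ at $q=0$ so as to get $\psi(p/N)^N\to\exp\!\bigl(p\,\E[\log(|X_1|+C)]\bigr)$, but this forces one to verify differentiability of $\psi$ at $0$ and to control integrals such as $(\log|X_1|)^2(|X_1|+C)^q$ uniformly for small $q$, which is an avoidable nuisance. The convexity/monotonicity argument sidesteps all of that: it uses only that $\psi$ is \emph{finite at the single point} $q=1/2$, which is exactly the statement that a Cauchy variable has a finite absolute moment of order $1/2$ — precisely the structural feature of Cauchy distributions exploited throughout the paper.
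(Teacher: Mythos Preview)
Your proof is correct. The overall strategy matches the paper's --- dominate $W_N$ pointwise by a constant plus a geometric mean $G_N$ of i.i.d.\ nonnegative terms, then bound the moments of $G_N$ using independence --- but the execution differs in two respects worth noting. First, the paper controls $\E[G_N]$ by passing to the limit $\E[(|X_1-\mu|+M)^{1/N}]^N\to\exp\bigl(\E[\log(|X_1-\mu|+M)]\bigr)$, i.e.\ precisely the Taylor-type step you flag as an avoidable nuisance; your convexity argument for the cumulant generating function replaces this by the single inequality $\psi(q)\le\psi(1/2)^{2q}$ for $q\in[0,1/2]$, needing only finiteness of one absolute moment and no differentiability at $0$. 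Second, as written the paper's argument literally establishes only $\sup_N\E[W_N]<\infty$, which is $L^1$-boundedness and does \emph{not} by itself imply uniform integrability; the intended (and easy) upgrade to an $L^p$-bound with $p>1$ is left implicit there. Your version targets $L^{3/2}$ from the outset and closes that gap, so it is in fact the tighter write-up.
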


\begin{proof}
Let $M := \max\{|\theta - \mu| : \theta \in I\}$.
Then, for every $\theta \in I$,
\[ \left| \theta +  \prod_{j=1}^{N} (X_{j} - \theta)^{1/N} - \gamma \right| \le |\gamma| + M +  \prod_{j=1}^{N} ( |X_{j} - \mu| + M)^{1/N}. \]
Hence,
\[ E\left[ \sup_{\theta \in I} \left| \theta +  \prod_{j=1}^{N} (X_{j} - \theta)^{1/N} \right| \right] \le M + E\left[( |X_{1} - \mu| + M)^{1/N}\right]^N. \]
Since
\[ \lim_{N \to \infty} E\left[( |X_{1} - \mu| + M)^{1/N}\right]^N = \exp\left( E\left[ \log\left(|X_{1} - \mu| + M\right) \right]\right), \]
we see that
\[ \sup_N E\left[( |X_{1} - \mu| + M)^{1/N}\right]^N < +\infty. \]
\end{proof}

On the other hand,
 if we let $Y_N := (X^{(\lfloor N/2 \rfloor + 1)} + X^{(\lfloor N/2 \rfloor)})/2$,
 then, for even number $N$, $Y_N$ is the median of $\{X_1, \cdots, X_N\}$, and by numerical computations, $Z_N$ approximates $\gamma$ well, and we conjecture that
 $\lim_{N \to \infty} Z_{N} = \gamma$, \textup{ a.s.}
However, we do not have any proofs of it.
We do not see why the case that $Y_N = X^{(\lfloor N/2 \rfloor + 1)}$ is bad, but, on the other hand,  the case that $Y_N = (X^{(\lfloor N/2 \rfloor + 1)} + X^{(\lfloor N/2 \rfloor)})/2$ is good,
although $X^{(\lfloor N/2 \rfloor + 1)}$ and $(X^{(\lfloor N/2 \rfloor + 1)} + X^{(\lfloor N/2 \rfloor)})/2$ are close to each other with high probability if $N$ is large.

One way to resolve this delicate issue is to consider the estimator
\[ \theta - \epsilon i +  \prod_{j=1}^{N} (X_{j} - \theta + \epsilon i)^{1/N}  \]
for some $\epsilon > 0$.
This is also an unbiased estimator of $\gamma$.
This is easier to handle due to the following.

\begin{proposition}
Let $I$ be a closed interval.
Then,
\[ \sup_{\theta \in I} \left| \theta - \epsilon i +  \prod_{j=1}^{N} (X_{j} - \theta + \epsilon i)^{1/N}  - \gamma \right| \]
converges to $0$ a.s. and in $L^1$ as $N \to \infty$.
\end{proposition}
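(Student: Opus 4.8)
The plan is to establish the almost-sure and $L^1$ convergence separately, with the $L^1$ part following easily once the pointwise estimate is in place. Fix a closed interval $I$ and write $M := \max\{|\theta - \mu| : \theta \in I\}$. The key gain over the median-subtraction scheme is that the perturbation $+\epsilon i$ keeps every argument $X_j - \theta + \epsilon i$ bounded away from $0$ uniformly in $\theta \in I$, since $|\mathrm{Im}(X_j - \theta + \epsilon i)| = \epsilon > 0$. Thus $\log(X_j - \theta + \epsilon i)$ is well-defined with a bounded imaginary part (in $(0,\pi)$, say, with a fixed branch), and the map $\theta \mapsto \log(X_j - \theta + \epsilon i)$ is continuous, indeed holomorphic-in-$\theta$ on a complex neighbourhood of $I$.

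First I would rewrite
\[
\theta - \epsilon i + \prod_{j=1}^N (X_j - \theta + \epsilon i)^{1/N} = \theta - \epsilon i + \exp\Bigl( \frac{1}{N}\sum_{j=1}^N \log(X_j - \theta + \epsilon i) \Bigr),
\]
and observe that, for $X_j \sim C(\mu,\sigma)$, the variable $X_j - \theta + \epsilon i$ is distributed as $C(\mu - \theta, \sigma + \epsilon)$ shifted into the complex plane; more to the point, by the Proposition in Section \ref{sec:auxiliary-results} (applied with $f = \log$), $\E[\log(X_j - \theta + \epsilon i)] = \log(\gamma - \theta + \epsilon i)$, and by Proposition \ref{prop:several-quantities} the variance of $\log(X_j - \theta + \epsilon i)$ is finite and bounded in $\theta \in I$. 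Then I would apply a uniform strong law of large numbers: since $\theta \mapsto \log(x - \theta + \epsilon i)$ is continuous on $I$ for every fixed $x \in \R$, and $\E\bigl[\sup_{\theta \in I} |\log(X_1 - \theta + \epsilon i)|\bigr] < \infty$ (the modulus of the real part has logarithmic growth in $|X_1|$, which is integrable, and the imaginary part is bounded), a standard argument — cover $I$ by finitely many points, use uniform continuity, or invoke the uniform SLLN directly — gives
\[
\sup_{\theta \in I} \Bigl| \frac{1}{N}\sum_{j=1}^N \log(X_j - \theta + \epsilon i) - \log(\gamma - \theta + \epsilon i) \Bigr| \to 0 \quad \text{a.s.}
\]
Composing with $\exp$, which is uniformly continuous on the relevant bounded region, and adding back $\theta - \epsilon i$, yields the a.s. convergence to $\theta - \epsilon i + \exp(\log(\gamma - \theta + \epsilon i)) = \theta - \epsilon i + (\gamma - \theta + \epsilon i) = \gamma$, uniformly in $\theta \in I$.

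For the $L^1$ convergence I would combine this a.s. convergence with a uniform integrability statement in the spirit of Proposition \ref{prop:uniformintegrable}: from
\[
\Bigl| \theta - \epsilon i + \prod_{j=1}^N (X_j - \theta + \epsilon i)^{1/N} - \gamma \Bigr| \le |\gamma| + M + \epsilon + \prod_{j=1}^N \bigl( |X_j - \mu| + M + \epsilon \bigr)^{1/N},
\]
taking expectations and using $\sup_N \E\bigl[(|X_1 - \mu| + M + \epsilon)^{1/N}\bigr]^N < \infty$ (as in the proof of Proposition \ref{prop:uniformintegrable}, via $\lim_N \E[(|X_1-\mu|+M+\epsilon)^{1/N}]^N = \exp(\E[\log(|X_1-\mu|+M+\epsilon)])$), one gets a bound on the first absolute moment of the supremum that is uniform in $N$. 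A little more work — bounding the $(1+\delta)$-th moment for small $\delta$ by the same device, since $(|X_1-\mu|+M+\epsilon)^{(1+\delta)/N}$ is still eventually integrable — upgrades this to uniform integrability of the supremum, and a.s. convergence plus uniform integrability gives $L^1$ convergence.

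The main obstacle, I expect, is the uniformity in $\theta$ in the strong law: one must check carefully that the envelope $\sup_{\theta \in I} |\log(X_1 - \theta + \epsilon i)|$ is integrable (this is where the $\epsilon i$ shift is essential — without it the envelope could blow up like $\log \frac{1}{|X_1 - \theta|}$ near each $\theta$, which is still integrable in $X_1$ but only delicately so, and the supremum over $\theta$ would be genuinely problematic) and that the continuity in $\theta$ is uniform enough to pass from pointwise to uniform convergence. With the $\epsilon i$ regularisation both issues become routine, which is precisely the point the authors are making.
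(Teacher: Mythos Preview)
Your proposal is correct and follows essentially the same route as the paper: invoke a uniform strong law of large numbers for $\log(X_j - \theta + \epsilon i)$ to get the a.s.\ convergence, then deduce $L^1$ convergence from uniform integrability established by the same estimate as in Proposition~\ref{prop:uniformintegrable}. Your write-up is in fact more careful than the paper's (which dispatches each step in one line); in particular, your observation that one should bound a $(1+\delta)$-th moment to upgrade $L^1$-boundedness to genuine uniform integrability makes explicit a step the paper leaves implicit.
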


\begin{proof}
The a.s. convergence follows from the uniform law of large numbers for $\left(\log(X_N - \theta + \epsilon i) - \log\left(\mu - \theta + (\sigma + \epsilon)i\right) \right)_N$.

In the same manner as in the proof of Proposition \ref{prop:uniformintegrable},
we can show that
$$\left(\sup_{\theta \in I} \left| \theta - \epsilon i +  \prod_{j=1}^{N} (X_{j} - \theta + \epsilon i)^{1/N}  -\gamma \right|\right)_N$$
is uniformly integrable.
Thus, we see the $L^1$ convergence.
\end{proof}

There is also a problem in this resolution because there are many possibilities of $\epsilon > 0$, and we do not see which $\epsilon > 0$ should be taken.
We do not delve into this issue in this paper. \\

{\it Acknowledgements} \ The authors wish to give their gratitude to Dr Thomas Simon for references.
The second and third authors were supported by JSPS KAKENHI 19K14549 and 16K05196 respectively.

\bibliographystyle{amsplain}
\bibliography{Cauchy3}

\providecommand{\bysame}{\leavevmode\hbox to3em{\hrulefill}\thinspace}
\providecommand{\MR}{\relax\ifhmode\unskip\space\fi MR }
\providecommand{\MRhref}[2]{%
  \href{http://www.ams.org/mathscinet-getitem?mr=#1}{#2}
}
\providecommand{\href}[2]{#2}
\begin{thebibliography}{10}

\bibitem{Akaoka2020a}
Yuichi Akaoka, \emph{Parameter estimation using complex valued moments for
  {Cauchy} distributions}, Master's thesis, Department of mathematics, Shinshu
  University, January 2020.

\bibitem{Akaoka2021-2}
Yuichi Akaoka, Kazuki Okamura, and Yoshiki Otobe, \emph{Bahadur efficiency of
  the maximum likelihood estimator and one-step estimator for quasi-arithmetic
  means of the {C}auchy distribution}, Annals of the Institute of Statistical
  Mathematics (2022).

\bibitem{Akaoka2021-1}
\bysame, \emph{Limit theorems for quasi-arithmetic means of random variables
  with applications to point estimations for the {C}auchy distribution},
  Brazilian Journal of Probability and Statistics (2022), to appear.

\bibitem{CohenFreue2007}
Gabriela~V. Cohen~Freue, \emph{The {P}itman estimator of the {C}auchy location
  parameter}, Journal of Statistical Planning and Inference \textbf{137}
  (2007), no.~6, 1900--1913.

\bibitem{Ferguson1978}
Thomas~S. Ferguson, \emph{Maximum likelihood estimates of the parameters of the
  {C}auchy distribution for samples of size 3 and 4}, Journal of the American
  Statistical Association \textbf{73} (1978), no.~361, 211--213.

\bibitem{Haas1970}
Gerald Haas, Lee Bain, and Charles Antle, \emph{Inferences for the {Cauchy}
  distribution based on maximum likelihood estimators}, Biometrika \textbf{57}
  (1970), no.~2, 403--408.

\bibitem{Haas1969}
Gerald~Nicholas Haas, \emph{Statistical inferences for the {C}auchy
  distribution based on maximum likelihood estimators}, Doctoral dissertations.
  2274., University of Missouri--Rolla, 1969.

\bibitem{Hinkley1978}
D.~V. Hinkley, \emph{Likelihood inference about location and scale parameters},
  Biometrika \textbf{65} (1978), no.~2, 253--261.

\bibitem{Kravchuk2012}
O.~Y. Kravchuk and P.~K. Pollett, \emph{{Hodges}--{Lehmann} scale estimator for
  {Cauchy} distribution}, Communications in Statistics - Theory and Methods
  \textbf{41} (2012), no.~20, 3621--3632.

\bibitem{Lawless1972}
J.~F. Lawless, \emph{Conditional confidence interval procedures for the
  location and scale parameters of the {Cauchy} and logistic distributions},
  Biometrika \textbf{59} (1972), no.~2, 377--386.

\bibitem{McCullagh1993}
Peter McCullagh, \emph{On the distribution of the {C}auchy maximum-likelihood
  estimator}, Proceedings of the Royal Society. London. Series A \textbf{440}
  (1993), 475--479.

\bibitem{McCullagh1996}
\bysame, \emph{{M}{\"o}bius transformation and {C}auchy parameter estimation},
  The Annals of Statistics \textbf{24} (1996), no.~2, 787--808.

\bibitem{Okamura2021}
Kazuki Okamura and Yoshiki Otobe, \emph{Characterizations of the maximum
  likelihood estimator of the {C}auchy distribution}, Lobachevskii Journal of
  Mathematics (2021), to appear.

\bibitem{Rothenberg1964}
Thomas~J. Rothenberg, Franklin~M. Fisher, and C.~B. Tilanus, \emph{A note on
  estimation from a {C}auchy sample}, Journal of the American Statistical
  Association \textbf{59} (1964), no.~306, 460--463.

\bibitem{Shao2003}
Jun Shao, \emph{Mathematical statistics}, 2\textsuperscript{nd} ed., Springer
  New York, 2003.

\bibitem{Vaart1998}
A.~W. van~der Vaart, \emph{Asymptotic statistics}, Cambridge Series in
  Statistical and Probabilistic Mathematics, Cambridge University Press,
  October 1998.

\bibitem{Vrbik2013}
Jan {Vrbik}, \emph{{Accurate confidence regions based on MLEs}}, {Adv. Appl.
  Stat.} \textbf{32} (2013), no.~1, 33--56.

\bibitem{Zolotarev1986}
V~M Zolotarev, \emph{One-dimensional stable distributions}, American
  Mathematical Society, 1986.

\end{thebibliography}
\end{document}